\theoremstyle{plain}
\newtheorem{thm}{Theorem}[section]
\newtheorem{prop}{Proposition}[section]
\newtheorem{rem}{Remark}[section]
\newtheorem{lem}[prop]{Lemma}
\newtheorem{defi}[prop]{Definition}
\newtheorem{rmk}[prop]{Remark}
\numberwithin{equation}{section}
\newcommand {\R} {\mathbb{R}} \newcommand {\Z} {\mathbb{Z}}
 \newcommand {\N} {\mathbb{N}}
\newcommand {\p} {\partial}
\newcommand{\eps}{\epsilon}
\newcommand{\wt}{\widetilde}
\newcommand{\HH}{\mathcal{H}}
\newcommand{\norm}[1]{\lVert #1 \rVert}         % Formatting for the norm
\DeclareMathOperator{\G}{\mathcal{G}}
\DeclareMathOperator {\dist} {dist}
\DeclareMathOperator{\diam}{diam}
\renewcommand{\theequation}{\thesection.\arabic{equation}}
\title[Duality between RT and NRT]{Duality between range and no-response tests and its application for inverse problemS}
\author[Lin]{Yi-Hsuan Lin}
\address{Department of Applied Mathematics, National Chiao Tung University, Hsinchu 30010, Taiwan. }
\curraddr{}
\email{yihsuanlin3@gmail.com}
\author[Nakamura]{Gen Nakamura}
\address{Department of Mathematics, Hokkaido University, Sapporo 060-0810, Japan}
\curraddr{}
\email{nakamuragenn@gmail.com}
\author[Potthast]{Roland Potthast}
\address{Department of Mathematics and Statistics,
	University of Reading, RG6 6AH, UK}
\curraddr{}
\email{r.w.e.potthast@reading.ac.uk}
\author[Wang]{Haibing Wang}
\address{School of Mathematics, Southeast University, Nanjing 210096, China}
\curraddr{}
\email{hbwang@seu.edu.cn}
\begin{document}
	
	\maketitle
	
	\begin{abstract}
		In this paper we will show the duality between the range test (RT) and no-response test (NRT) for the inverse boundary value problem for the Laplace equation in $\Omega\setminus\overline D$ with
		an obstacle $D\Subset\Omega$ whose boundary $\partial D$ is visible from the boundary $\partial\Omega$ of $\Omega$ and a measurement is given as a set of Cauchy data on $\partial\Omega$. Here the Cauchy data is given by a unique solution $u$ of the boundary value problem for the Laplace equation in $\Omega\setminus\overline D$ with homogeneous and inhomogeneous Dirichlet boundary condition on $\partial D$ and $\partial\Omega$, respectively. These testing methods are domain sampling method to estimate the location of the obstacle using test domains and the associated indicator functions. Also both of these testing methods can test the analytic extension of $u$ to the exterior of a test domain. Since these methods are defined via some operators which are dual to each other, we could expect that there is a duality between the two methods. We will give this duality in terms of the equivalence of the pre-indicator functions associated to their indicator functions. As an application of the duality, the reconstruction of $D$ using the RT gives the reconstruction of $D$ using the NRT and vice versa. We will also give each of these reconstructions without using the duality if the Dirichlet data of the Cauchy data on $\partial\Omega$ is not identically zero and the solution to the associated forward problem does not have any analytic extension across $\partial D$.  
		Moreover, we will show that these methods can still give the reconstruction of $D$ if $D$ is a convex polygon and it satisfies one of the following two properties: all of its corner angles are irrational and its diameter is less than its distance to $\partial\Omega$. 		
		\medskip
		
		\noindent{\bf Keywords.}  Inverse boundary value problem,  range test, no-response test, duality
		
		\noindent{\bf Mathematics Subject Classification (2010)}:  31B20, 35J15, 65N21
		
	\end{abstract}

	%	\tableofcontents

	\section{Introduction}
	\setcounter{equation}{0}
	 We will first set up our inverse problem. To begin with let $\Omega \subset \R^n$ for $n=2,3$ be a bounded domain with $C^{2}$ boundary $\partial\Omega$. Physically $\Omega$ is a medium and it can be either homogeneous electric or heat conductive medium with conductivity $1$. Let $D\Subset \Omega$ be a domain with Lipschitz boundary $\partial D$ such that $\Omega\setminus \overline{D}$ is connected. Then the voltage or temperature of electric or heat denoted by $u$ satisfies the following boundary value problem 
	\begin{align}\label{Main equation}
	\begin{cases}
	\Delta u =0 & \text{ in }\Omega \setminus \overline{D}, \\
	u=0 & \text{ on }\p D, \\
	u=f & \text{ on }\p \Omega,
	\end{cases}
	\end{align}
	where $f$ is taken from the $L^2$ based Sobolev space $H^{1/2}(\partial\Omega)$ of order $1/2$ on $\partial\Omega$ which is a specified voltage or temperature at $\partial\Omega$. The physical meaning of the Dirichlet boundary condition for $\partial D$ is the earthing boundary for an electric conductive medium and the prescribed $0$ temperature for a heat conductive medium. 
	
	It is well known that \eqref{Main equation} is well-posed. That is for any given $f\in H^{1/2}(\partial\Omega)$ of order $1/2$ on $\partial\Omega$, there exists a unique solution $u=u_f$ in the $L^2$ based Sobolev space $ H^1(\Omega\setminus\overline D)$ of order $1$ in $\Omega\setminus\overline D$ to \eqref{Main equation} such that
	\begin{equation*}\label{well-posedness}
	\Vert u\Vert_{H^1(\Omega\setminus\overline D)}\le C \Vert f\Vert_{H^{1/2}(\partial\Omega)}
	\end{equation*}
	for some constant $C>0$ which does not depend on $f$ and $u$. Henceforth we call such a $C>0$ general constant, which may differ from place to place, but we will use the same notation $C$. 
	%Sometimes we will suppress the general constant $C$ and just write $\le C$ as $\lesssim$.  

	Based on this well-posedness, one can calculate the Neumann derivative $\p_\nu u_f$ on $\partial\Omega$ which belongs to the dual space $H^{-1/2}(\partial\Omega)$ of $H^{1/2}(\partial\Omega)$, and this means that we can measure either electric current or heat flux on  $\partial\Omega$. The pair $\left\{f,\left. \partial_\nu u_f\right|_{\partial\Omega}\right\}$ with the unit normal $\nu$ of $\partial\Omega$ directed outside $\Omega$ is called a Cauchy data. Throughout this paper, we assume that the boundary data $f$ on $\p \Omega$ is a \emph{non-identically zero} function, and $u\in H^1(\Omega\setminus\overline D)$ always stands for the solution to \eqref{Main equation}.  
	Then our inverse boundary value problem can be stated as follows.
	
	\medskip
	
	\noindent{\bf Inverse Boundary Value Problem}
	
	\vspace{1mm}
	\noindent Given a set of Cauchy data $\left\{f|_{\p \Omega},\left. \partial_\nu u_f\right|_{\partial\Omega}\right\}$ taken as our measurement, identify $D$ from this measurement. 
	
	\medskip
	The uniqueness of this inverse problem has been already known very early for example from the proof given for the uniqueness of identifying an unknown rigid inclusion inside an isotropic elasticity medium \cite{Ang}. Also the stability estimate for the identification is known for the conductivity equation \cite{Alessandrini} and even for the isotropic elasticity system \cite{Higashimori,Morassi}. The next natural problem is to give a reconstruction method to reconstruct $D$ from the given Cauchy data. We are particularly interested in the reconstruction methods called the range test (RT) and the no response test (NRT) very well known in the inverse scattering problem. 
	The RT and NRT were introduced by Potthast-Sylvester-Kusiak in \cite{KPS} and Luke-Potthast in \cite{Luke} both for the inverse acoustic scattering problem to identify a scatterer such as a sound soft or sound hard obstacle, respectively. There are single wave RT/NRT and multiple waves RT/NRT. The corresponding measurements are the far field of the scattered wave generated by one incident plane wave and the scattering amplitude generated by multiple incident waves, respectively. Here it should be remarked that the multiple incident waves mean infinitely many incident waves. The multiple waves RT/NRT can recover the scatterer, but the single wave RT/NRT in general can only give the upper estimate of the convex scattering support. Moreover the single wave RT gives an easy test for analytic extension of $u$ (see  (\cite{KPS}).  For further information about the RT/NRT for the inverse acoustic scattering problems see \cite{Nakamura}, \cite{Luke} and \cite{Potthast}. RT/NRT were also applied to inverse scattering problems for electromagnetic waves (\cite{PS}) and the Oseen flow linearized around a constant velocity field (\cite{ZP}). We will refer the single wave RT and NRT adapted to our inverse problem by RT and NRT which will be given in Section \ref{Section 1} and Section \ref{duality}, respectively. There is a duality known between the RT and NRT for the inverse scattering problem for acoustic waves (\cite{Nakamura}) and for the Oseen flow linearized at constant velocity field (\cite{ZP}). The RT/NRT for the inverse boundary value problems become a bit more complicated than those for the inverse scattering problems. The aims of this paper are to give the corresponding duality for our inverse problem, and by using either the RT or the NRT to test the analytic extension of the solution $u$ of \eqref{Main equation}, reconstruct $D$ under the assumption that $u$ does not have analytic extension across $\partial D$.
	Our main results are the following three theorems. 
	\begin{thm}\label{main result} There is a duality between the RT and NRT for the inverse boundary value problem.	Its details will be given in Section \ref{duality}.
	\end{thm}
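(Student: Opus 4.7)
The plan is to recast both tests as operator-theoretic problems indexed by admissible test domains $G$ with $\overline D\subset G\Subset\Omega$ whose boundary $\partial G$ is visible from $\partial\Omega$, to extract from each test a scalar pre-indicator function $G\mapsto p_{\mathrm{RT}}(G)$ and $G\mapsto p_{\mathrm{NRT}}(G)$, and then to prove the pointwise identity $p_{\mathrm{RT}}(G)=p_{\mathrm{NRT}}(G)$ over the class of admissible $G$. Once this equality is established, the indicator functions themselves, obtained from the pre-indicators by the same thresholding/truncation procedure, coincide, and the mutual reduction between RT and NRT reconstructions announced in the abstract is immediate.

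The concrete construction I would use is as follows. For the RT, fix a single-layer potential ansatz with density supported on $\partial G$ as a candidate analytic continuation of $u$ into $\Omega\setminus\overline G$, force its Dirichlet trace on $\partial\Omega$ to match the measured data $f$ (in a Tikhonov-regularised least-squares sense), and define $p_{\mathrm{RT}}(G)$ as the infimum of an appropriate norm of that density, equivalently the $H^1$-norm of the resulting harmonic extension. For the NRT, take as test functions harmonic functions $v$ defined in a neighbourhood of $\overline G$ and suitably normalised there, and pair each with the Cauchy datum via Green's second identity
\[
\int_{\partial\Omega}\bigl(f\,\partial_\nu v - v\,\partial_\nu u_f\bigr)\,dS;
\]
$p_{\mathrm{NRT}}(G)$ is then the supremum of the absolute value of this pairing over admissible $v$. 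The crucial observation, which is really the content of the theorem, is that the two linear operators driving the two tests are formal adjoints with respect to the $L^2(\partial\Omega)$ pairing, because both are assembled from the fundamental solution of $-\Delta$: the RT is driven by the single-layer operator $S_G\colon H^{-1/2}(\partial G)\to H^{1/2}(\partial\Omega)$, while the NRT is driven by essentially $S_G^{*}$ acting on Cauchy data.

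The identity $p_{\mathrm{RT}}(G)=p_{\mathrm{NRT}}(G)$ then follows from a primal-dual duality argument: starting from the bilinear pairing $\int_{\partial G}\psi\,v\,dS$ between an RT density $\psi$ and an NRT test function $v$, I would apply Green's second identity both in $\Omega\setminus\overline G$ and across $\partial G$, using the homogeneous Dirichlet condition on $\partial D$ (which is automatic once $D\subset G$) together with unique continuation in the connected set $\Omega\setminus\overline D$, to identify the infimum defining $p_{\mathrm{RT}}(G)$ with the supremum defining $p_{\mathrm{NRT}}(G)$ as the primal and dual values of a convex optimisation pair. The main technical obstacle will be the functional-analytic bookkeeping: the RT naturally lives in the Sobolev scale $H^{\pm 1/2}$ on $\partial G$ and $\partial\Omega$, whereas the NRT most naturally lives on harmonic test functions defined in a fattened neighbourhood of $\overline G$. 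One therefore has to choose a common pivot space, verify the density of admissible test families in that pivot, and justify the exchange of $\inf$ and $\sup$. The visibility of $\partial D$ from $\partial\Omega$ and the connectedness of $\Omega\setminus\overline D$ enter at precisely this point, guaranteeing non-degeneracy of the dual pairing and the unique continuation needed to identify the two sides.
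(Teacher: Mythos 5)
Your high-level plan---recognize that the two tests are driven by mutually adjoint operators and then equate an infimum of a density norm with a supremum of a dual pairing---is the same strategy the paper follows. But there are two substantive departures, one of which is a real gap.

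First, the operator you build the RT around does not match the paper's, and as stated it is underdetermined. You propose a free-space single-layer potential on $\partial G$ and impose that its \emph{Dirichlet} trace on $\partial\Omega$ match $f$. By itself that is one piece of Cauchy data and does not pin down a harmonic extension of $u$. The paper instead first subtracts the harmonic extension $v$ of $f$ in $\Omega$, works with $w=u-v$ (which vanishes on $\partial\Omega$), and uses the single-layer potential built from the \emph{Dirichlet Green function} $\G$ of $\Omega$, so $S[\varphi]$ already vanishes on $\partial\Omega$; the RT equation is then $R[\varphi]=\partial_\nu S[\varphi]=\partial_\nu w$ on $\partial\Omega$, and matching the Neumann trace gives full Cauchy data and lets unique continuation do the rest (Theorem \ref{claim1}). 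Your formulation would need to be redone in this normalized form, or supplemented by a Neumann matching condition, before the characterization of analytic extendability is available.

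Second, and more seriously, the step you describe as ``functional-analytic bookkeeping'' and ``justify the exchange of $\inf$ and $\sup$'' \emph{is} the theorem, not a side issue, and you have not supplied an argument for it. The delicate case is exactly when $\partial_\nu w\notin\mathrm{Rge}(R)$ and both the RT pre-indicator and the NRT pre-indicator must be shown to equal $+\infty$; invoking a generic convex minimax theorem will not deliver this, since $R$ is compact with nonclosed range. The paper resolves it concretely: it starts from the Tikhonov family $\varphi_\alpha=(\alpha I+R^\ast R)^{-1}R^\ast\partial_\nu w$, restricts the test directions to the dense range $\mathrm{Rge}(R^\ast)$ (Proposition \ref{denseness of Rge(R ast)}), transfers the regularizing resolvent from $R^\ast R$ to $R R^\ast$ via $R(\alpha I+R^\ast R)^{-1}=(\alpha I+RR^\ast)^{-1}R$, expands everything in the singular system of $R^\ast$, and then proves the identity, infinite values included, by exchanging a double limit of a monotone double sequence $a(k,\ell)$. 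That singular-system/monotone-convergence argument is the content you would need to supply; without it, the claimed equality $p_{\mathrm{RT}}(G)=p_{\mathrm{NRT}}(G)$ is asserted rather than proved.
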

	
	%It is possible to obtain a result similar to Theorem \ref{main result} for more general equations such as the conductivity equation with anisotropic and heterogeneous conductivity, and also for the static elasticity equation with isotropic and heterogeneous elasticity tensor. 
	The same is true for an unknown $D$	with Neumann boundary condition at $\partial D$. 
	
	\medskip
	As an application of testing the analytic extension of $u$ and the above duality theorem, we have the following reconstruction of $D$.
	\begin{thm}\label{reconstruction}
		Consider the inverse boundary value problem. Then either using the RT or NRT, we can reconstruct $D$ from the above given Cauchy data if $u$ does not have any analytic extension across $\partial D$. Their details
		will be given in Section \ref{rec by RT} for the RT and Section \ref{rec by NRT} for the NRT, respectively. 
		
	\end{thm}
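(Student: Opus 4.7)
The plan is to convert the reconstruction of $D$ into a decision problem: for a sufficiently rich family $\mathcal{A}$ of admissible test domains $G\Subset\Omega$, use the Cauchy data $\{f|_{\p\Omega},\p_\nu u_f|_{\p\Omega}\}$ to decide whether $G$ contains $D$, and then recover $D$ as the intersection of the ``passing'' test domains. I would carry out this program for the RT in Section \ref{rec by RT}; the parallel argument for the NRT in Section \ref{rec by NRT} then follows either directly or by invoking the duality Theorem \ref{main result} at the level of the pre-indicator functions, so in essence there is only one argument to give.

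For the RT, to each admissible $G$ I would attach an indicator $I_{\rm RT}(G)$ built from the Cauchy data and a dual pairing against single/double layer potentials supported on $\p G$. Its defining property (cf.\ \cite{KPS}) is that $I_{\rm RT}(G)<\infty$ if and only if $u$ admits a harmonic extension to $\Omega\setminus\overline G$. If $D\subset G$ then $u$ is already harmonic on $\Omega\setminus\overline D\supset\Omega\setminus\overline G$, so the indicator is trivially finite. The substance of the theorem is the converse: $I_{\rm RT}(G)<\infty$ should force $D\subset G$. Granting this dichotomy, I would reconstruct $D$ as
\[
D \;=\; \bigcap\,\bigl\{\,G\in\mathcal{A}\,:\,I_{\rm RT}(G)<\infty\,\bigr\},
\]
after checking that $\mathcal{A}$ is rich enough (for instance, smooth $G$ with a prescribed visibility condition from $\p\Omega$) for this intersection to pin down $D$.

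The main obstacle is precisely the converse implication, and it is here that the hypothesis ``$u$ has no analytic extension across $\p D$'' is essential. Suppose $I_{\rm RT}(G)<\infty$ produces a harmonic extension $\wt u$ on $\Omega\setminus\overline G$ while $D\not\subset G$. I would pick $x_0\in\p D\setminus\overline G$ and a connected component $U$ of $(\Omega\setminus\overline D)\cap(\Omega\setminus\overline G)$ accumulating at $x_0$; on $U$ both $u$ and $\wt u$ are harmonic and share their Cauchy data on a relatively open piece of $\p\Omega$, so unique continuation forces $u\equiv \wt u$ on $U$. Consequently $\wt u$ supplies a harmonic continuation of $u$ across a neighborhood of $x_0$ in $\p D$, contradicting the standing hypothesis. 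This patching step — verifying connectedness of $U$, identifying $u$ with $\wt u$ there, and passing to a cross-boundary extension — is the heart of the proof, and the standing assumption $f\not\equiv 0$ is what rules out the degenerate case $u\equiv 0$ in which the argument would be vacuous. The NRT reconstruction is then obtained by the same scheme applied to its own indicator, or equivalently by transporting the RT conclusion through the duality of pre-indicator functions supplied by Theorem \ref{main result}.
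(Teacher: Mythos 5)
Your RT route matches the paper's Section \ref{rec by RT} in both structure and substance: the dichotomy ``$I_{RT}(G)<\infty$ iff $w$ (equivalently $u$, after subtracting the $D$-free solution $v$) extends harmonically to $\Omega\setminus\overline G$'' is exactly Theorem \ref{claim1} together with \eqref{equivalence of analytic extension by RT indicator function}, and your UCP patching step is the same mechanism the paper uses there (you should, however, phrase the indicator in terms of $w=u-v$ — the RT operator $R$ is built from the Dirichlet Green function and acts on $\partial_\nu w$, not $\partial_\nu u$ — and you have dropped the regularity proviso $w_+|_{\partial G}\in H^{1/2}(\partial G)$ that the paper carries along). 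Granting that, part (iii) and the intersection formula $\overline D=\bigcap_{G\in\mathcal{P}}\overline G$ fall out as you describe, and this is Theorem \ref{RT}.

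The genuine gap is your treatment of the NRT. You present ``apply the same scheme to the NRT indicator'' and ``invoke the duality'' as interchangeable, equally easy alternatives, but they are not. The duality route works and the paper concedes it (``Needless to say that we can have the reconstruction for the NRT by using the duality''), so your proposal is not wrong. But the direct NRT argument in Section \ref{rec by NRT} is in a different league of difficulty from the RT one and does not reduce to ``the same scheme.'' The hard direction there (Theorem \ref{Thm: NRT}(iii)) is proved via a Taylor-coefficient blowup argument: one constructs a homotopy $\{G_t\}$ from $\Omega$ to $G$, locates a maximal $t_0$ beyond which $w$ no longer extends, builds test densities by Runge-approximating normalized high-order derivatives $\HH(\cdot,z)=\frac{1}{2\sigma\kappa\beta(z,\ell)}(h\cdot\nabla)^\ell\G(\cdot,z)$ of the Green function by double-layer potentials (this needs the range density statement for $W$, Proposition \ref{denseness of Rge(W)}), and then shows the pre-indicator dominates $\beta(z,\ell)^{-1}|(h\cdot\nabla)^\ell w(z)|$, which is made to blow up using \cite[Lemma 3.2]{Potthast}. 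This direct proof also requires extra geometric hypotheses not present in the RT case ($\Omega$ and $G$ are $C^2$-spherical). None of this is visible in, or implied by, your sketch. The honest version of your proposal should either drop the claim that the direct NRT proof is ``the same argument,'' or commit to proving the NRT reconstruction purely as a corollary of the RT reconstruction through the duality of pre-indicator functions — which, incidentally, is cleaner than the paper's direct route, since the extra spherical-domain hypothesis then disappears.
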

	
	It can be noticed from these two theorems and a short introduction on the RT/NRT given before the theorems that the key behind the RT/NRT is the analyticity. These theorems can be further generalized to equations with analytic coefficients and the associated Green functions have the jump property, the decaying property at infinity, the mapping property and the Fredholm of index zero property for the trace of double layer potentials.
	
	A typical example that $u$ does not have any analytic extension across $\partial D$ is the case $\partial D$ is not analytic everywhere (see Corollary 2.3 of \cite{HNS}).
	This assumption on $u$ is a very strong assumption. We can relax this assumption if $D$ is a convex polygon which has either irrationally angled corners (see Section \ref{sec convex case} for its definition) or the distance property (see the next theorem for its definition). More precisely we have the following theorem.
	
	\begin{thm}\label{convex}
		Consider the inverse boundary value problem. Let $D$ be a convex polygon. Assume that all of its corners have irrational angle or it  satisfies the \emph{distance property} given as 
		\begin{equation}\label{distance property}
		\diam D<\dist(D,\partial\Omega),
		\end{equation}
		where $\diam(D)$ and $\dist(D,\partial\Omega)$ denote the diameter of $D$ and the distance between $D$ and $\partial\Omega$, respectively. Further we assume the following \emph{existence of a priori convex polygon} given as follows. There exists a convex polygon $D_0$ such that $D\subset D_0\Subset\Omega$ and $\Omega\setminus\overline{D_0}$ is connected. Then either using the RT or NRT, we can reconstruct $D$ from the above given Cauchy data.
	\end{thm}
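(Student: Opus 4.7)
My plan is to reduce Theorem~\ref{convex} to Theorem~\ref{reconstruction} by ruling out any single-valued analytic extension of $u$ across $\partial D$. Once this is established, the RT/NRT reconstruction applies with the test-domain family restricted to convex polygons $G$ satisfying $D\subset G\Subset D_0$; since $D$ is convex, the intersection of all such $G$ with bounded indicator coincides with $D$, yielding the reconstruction.

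For the irrational-angle hypothesis, I would invoke the Kondratiev expansion of $u$ at each vertex $P$ of $D$. With $\alpha$ the interior angle at $P$ and $\omega = 2\pi - \alpha$ the exterior opening of $\Omega\setminus\overline D$, in local polar coordinates $(r,\theta)$ centred at $P$ the Dirichlet condition produces
\[
u(r,\theta) = \sum_{k\ge 1} c_k^{(P)}\, r^{k\pi/\omega}\sin\!\bigl(k\pi\theta/\omega\bigr) + \text{(smoother remainders)}.
\]
When $\alpha/\pi\notin\mathbb{Q}$ all exponents $k\pi/\omega$ are irrational, so $u$ is analytic at $P$ iff $c_k^{(P)} = 0$ for every $k\ge 1$. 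If this vanishing held at every vertex, the iterated Schwarz reflections across the edges adjacent to $P$ (whose pairwise composition is a rotation by $2\alpha$) would force the resulting harmonic extension of $u$ to be rotationally invariant about each $P$, and hence identically zero in a neighbourhood of $\partial D$; unique continuation would then give $u\equiv 0$ in $\Omega\setminus\overline D$, contradicting $f\not\equiv 0$. So at least one corner carries a nontrivial leading singularity and $u$ fails to extend analytically across $\partial D$.

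The distance-property case is more delicate because Schwarz reflection across any edge $E$ of $D$ automatically supplies a local analytic extension $\tilde u(x) = -u(R_E(x))$, where $R_E$ is reflection across the line through $E$; so non-extendibility is not visible edge by edge. The inequality $\diam D < \dist(D,\partial\Omega)$ is precisely what guarantees $R_E(\overline D)\subset\Omega$ for every edge, so each reflected extension lives in a fixed neighbourhood of $\overline D$ inside $\Omega$. I would then argue that if a globally single-valued analytic extension of $u$ across all of $\partial D$ were to exist, it would produce a harmonic function on a full neighbourhood of $\overline D$ whose restriction to $D$ is harmonic with zero Dirichlet data on $\partial D$; the maximum principle forces this restriction to vanish, and analytic continuation then propagates the vanishing to all of $\Omega\setminus\overline D$, again contradicting $f\not\equiv 0$.

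The main obstacle is the distance-property case: the irrational-angle argument is essentially a corner-singularity computation, whereas the distance case requires a global monodromy/extension analysis in which the diameter-vs-distance inequality is exactly what keeps the reflection-extension scheme inside $\Omega$ and thereby makes the hypothetical single-valued extension amenable to the maximum-principle contradiction above.
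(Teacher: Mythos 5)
Your distance-property argument rules out the wrong thing and therefore has a genuine gap. What the RT/NRT reconstruction actually requires---once you restrict to convex polygonal test domains $G$, so that $\overline D\not\subset\overline G$ forces some vertex $P$ of $D$ into $\Omega\setminus\overline G$, and you invoke the blow-up criteria of Theorems~\ref{RT}(iii)/\ref{Thm: NRT}(iii)---is that $u$ admits no \emph{local} analytic extension across any single corner of $D$. Your argument instead hypothesises a \emph{globally single-valued} extension of $u$ to a full neighbourhood of $\overline D$ and then applies the maximum principle. These are not the same statement, and for a rational-angle corner the device from your irrational case buys you nothing: the two adjacent Schwarz reflections compose to a rotation of finite order, so a local continuation across such a corner is perfectly consistent with both edge reflections and does not automatically globalise. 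The missing link---which is exactly where $\diam D<\dist(D,\partial\Omega)$ does its actual work---is the Friedman--Isakov propagation step: from the assumed local continuation at the rational corner, a finite chain of Schwarz reflections across the edges of the convex polygon, which the distance inequality keeps inside $\Omega$, continues $u$ into all of $D$. Only then do the homogeneous Dirichlet data on $\partial D$ and UCP yield $u\equiv0$ and hence $f\equiv0$. The paper cites Lemmas~3.1 and 3.2 of Friedman--Isakov for precisely this; you must prove or cite that propagation, not assume its conclusion.

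A second, related problem is the framing ``reduce Theorem~\ref{convex} to Theorem~\ref{reconstruction}.'' Theorem~\ref{reconstruction} needs $u$ to have \emph{no} analytic extension across $\partial D$ in the sense of Definition~\ref{analytic extension}, i.e.\ across any positive-$\mathcal{H}^{n-1}$-measure piece of $\partial D$. As you yourself observe, Schwarz reflection across the interior of any edge always produces such an extension, so this hypothesis is never satisfied for a polygon and Theorem~\ref{reconstruction} simply cannot be invoked. The correct substitute (what the paper does in Section~\ref{sec convex case}) is to prove the weaker Proposition~\ref{no analytic extension} (no extension across corners), restrict to convex polygonal $G$, exploit convexity to place a vertex of $D$ outside $\overline G$ whenever $\overline D\not\subset\overline G$, and feed that directly into Theorems~\ref{RT}(iii)/\ref{Thm: NRT}(iii); for the NRT one also replaces the $C^2$-spherical homotopy from $\Omega$ by a homotopy of convex polygons from $D_0$ down to $G$. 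Your closing sentence gestures at the polygonal restriction but treats it as a supplement to the Theorem~\ref{reconstruction} reduction rather than as its replacement. The irrational-angle part, by contrast, is essentially sound and is a legitimate alternative to the paper's Taylor-series argument from Friedman--Isakov's Lemma~3.1 (vanishing of all non-integer-power coefficients, or equivalently density of the rotation orbit, gives $u\equiv 0$ near the corner, then UCP); note only that you need this at a single corner, not ``at every vertex.''
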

	
	\begin{rem}\label{Ikehata}${}$
		%\newline
		\begin{itemize}
			\item [{\rm (i)}] The existence of a priori convex polygon $D_0$ is necessary only in the case the NRT itself is used for the reconstruction. If we use the NRT with the duality between RT and NRT, we don't need to have this $D_0$. 
			\item[{\rm(ii)}] Replace the Dirichlet condition in \eqref{Main equation} by the Neumann condition, and assume that $u$ is not a constant function. Further let $D$ be a convex polygon which satisfies the distance property. Then Ikehata (\cite{Ikehata convex}) gave a reconstruction of $D$ from one set of Cauchy data by using his enclosure method.
		\end{itemize}
	\end{rem}
	
	Since there is a huge literature on the reconstruction methods for our inverse problem, we only give some major reconstruction methods by citing one paper for each method which we came across with strong interest. So we ask the readers to consult the literature there in and make further search
	to collect more information about the methods. They are iterative methods using the domain derivative \cite{Kress}, topological derivative \cite{Bonnet}, level set \cite{Burger} and quasi-reversibility \cite{Bourgeois}. Also there are non-iterative methods using the polarization tensor \cite{Ammari}, a family of special solutions called the complex geometrical optics solutions as test functions \cite{ikehata}, nearly orthogonal probing functions \cite{Zou} and point source \cite{HNS}. The second non-iterative method is called the enclosure method which we already mentioned in Remark \ref{Ikehata}. We remark that this last method was given for the inverse scattering problem for the Helmholtz equation, but it can be adapted to the inverse boundary value problem for the Laplace equation to give a reconstruction of an unknown obstacle with non-analytic boundary.
	
	In the rest of this paper except Section \ref{sec convex case} and Appendix we only consider the three dimensional case i.e. $n=3$ for the simplicity of description. Also we organize the rest of this paper as follows.
	In Section \ref{Section 1} we show that for a domain $G\Subset\Omega$ likewise $D$, the analytic extension of $u$ to $\Omega\setminus\overline G$ can be characterized by the RT. Such a domain will be called a test domain for RT and NRT. Then in Section \ref{duality} we will introduce the NRT and prove the duality between the RT and NRT. Based on the characterization of analytic extension of $u$ in terms of the RT, we will reconstruct $D$ from the given Cauchy data in Section \ref{rec by RT} by using the RT if the solution $u$ to \eqref{Main equation} does not have any analytic extension across $\partial D$. Further in Section \ref{rec by NRT}, we will reconstruct $D$ by using the NRT under the same situation as for the RT. 
	For these two reconstructions we have to emphasize that we will not use the duality.
	In the last section we will pick up the case that $D$ is a convex polygon and prove Theorem \ref{convex}. Appendix gives some propositions and their proofs cited in Sections \ref{Section 1} and \ref{sec convex case}.
	
	%%%%%%%%%%%%%%%%%%%%%	

	%%%%%%%%%%%%%%%%%%	
	\section{analytic extension and RT}\label{Section 1}
	\setcounter{equation}{0}
	
	%The inverse scattering is a theory to recover information about a medium and its embedded obstacles via the excited measurements.
	We first consider a version of the range test for the inverse boundary value problem for the Laplace equation. As we mentioned before, the RT was introduced in \cite{KPS} for the inverse scattering problem for the Helmholtz equation.
	
	For $f\in H^{1/2}(\partial\Omega)$, we recall that $u=u^f\in H^1(\Omega\setminus\overline D)$ is the solution to \eqref{Main equation}. Any domain $G\Subset\Omega$ likewise $D$ is called a \emph{test domain}. For a test domain $G$, we want to characterize the analytic extension of $u$ to $\Omega\setminus\overline G$ by using the RT, and by using this characterization, we want to analyze the analytic extension of $u$ across $\partial D$. The definition of analytic extension of $u$ across $\partial D$ is defined as follows.
	\begin{defi}\label{analytic extension}
		Let $\Omega$ and $D$ be open bounded sets given as before. A harmonic function $v\in H^1(\Omega\setminus\overline D)$ is said to have an \emph{analytic extension} across $\p D$ if and only if there exists a domain $D'$ with Lipschitz boundary $\partial D'$ such that 
		\[
		\overline{D'} \subsetneq \overline{D},  \qquad \mathcal{H}^{n-1}(\p D \cap \p D') >0,
		\]
		and a function $v'\in H^1(\Omega \setminus \overline{D'})$ such that 
		\begin{align*}
		\begin{cases}
		\Delta v'=0 & \text{ in }\Omega \setminus \overline{D'}, \\
		v'=v & \text{ in }\Omega \setminus \overline{D}.
		\end{cases}
		\end{align*}
		Here  $\mathcal{H}^{n-1}(A)$ stands for the $(n-1)$-dimensional Hausdorff measure of a measurable set $A$.
	\end{defi}
	
	In order to introduce the RT, we need some preparation. To begin with let $v\in H^1(\Omega)$ be the unique solution to
	the boundary value problem:
	\begin{equation*}%\label{bvp2}
	\begin{cases}
	\Delta v=0&\text{ in }\Omega,\\
	v=f&\text{ on }\partial\Omega
	\end{cases}
	\end{equation*}
	and consider $w:=u-v$. Then $w$ satisfies
	\begin{equation*}%\label{bvp3}
	\begin{cases}
	\Delta w=0&\text{ in } \Omega\setminus\overline D,\\
	w=-v&\text{ on }\partial D, \\
	w=0 &\text{ on }\partial\Omega.
	\end{cases}
	\end{equation*}
	Denote by $\G_y(\cdot)=\G(\cdot,y)$ be the Green function of $\Delta$ in $\Omega$ with Dirichlet boundary condition on $\partial\Omega$. Let $S[\varphi]$ be the \emph{single-layer potential} defined as $S: H^{-1/2}(\partial G)\rightarrow H^{1}(\Omega\setminus\partial G)$ by 
	\begin{equation}\label{single layer potential}
	S[\varphi](x)=\int_{\partial G}\G(x,y)\varphi(y)\,d\sigma (y),\,\,\varphi\in H^{-1/2}(\partial G),\,x\in\Omega\setminus\partial G
	\end{equation}
	with the surface element $d\sigma (y)$ of $\partial G$.
	Further define the operator 
	\begin{equation}\label{def of R}
	\begin{split}
	&R:H^{-1/2}(\partial G)\rightarrow H^{-1/2}(\partial\Omega) \text{ by } \\
	&R[\varphi]=\partial_\nu S[\varphi]\in H^{-1/2}(\partial \Omega),\,\,\varphi\in H^{-1/2}(\partial G),
	\end{split}	
	\end{equation}
	where $\nu$ is the unit normal of $\partial \Omega$ directed into the exterior of $\Omega$. Of course $R$ is a compact linear operator with the kernel $\partial_{\nu_x}\G(x,y)$.
	
	Now consider the solvability of the boundary integral equation:
	\begin{equation}\label{RT eq}
	R[\varphi](x)=\partial_\nu w(x),\,\,x\in\partial\Omega
	\end{equation}
	with respect to $\varphi\in H^{-1/2}(\partial G)$. Then we have the following theorem.
	
	\medskip\noindent
	\begin{thm}\label{claim1}
		\eqref{RT eq} is solvable if and only if $w$ can be analytically extended to $\Omega\setminus\overline G$ and $w_+\big|_{\partial G}\in H^{1/2}(\partial G)$, where $w_+=w\big|_{\Omega\setminus\overline G}$.
	\end{thm}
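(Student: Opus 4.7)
The plan is to prove both directions by exploiting the single-layer potential $S[\varphi]$ together with the fact that, because $\G(\cdot,y)$ vanishes on $\p\Omega$, $S[\varphi]$ automatically satisfies the same homogeneous Dirichlet condition on $\p\Omega$ as $w$. Matching Neumann data on $\p\Omega$ will then force equality in the exterior region via unique continuation.

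For the ``only if'' implication, suppose $\varphi\in H^{-1/2}(\p G)$ satisfies $R[\varphi]=\p_\nu w$ on $\p\Omega$. Both $S[\varphi]$ and $w$ are harmonic in $\Omega\setminus(\overline D\cup\overline G)$, share vanishing Dirichlet data on $\p\Omega$, and share Neumann data $\p_\nu w$ there. Holmgren's uniqueness theorem, combined with the real analyticity of harmonic functions and the connectedness of $\Omega\setminus\overline D$, identifies $S[\varphi]$ with $w$ on the component of $\Omega\setminus(\overline D\cup\overline G)$ that meets $\p\Omega$. Since $S[\varphi]$ is harmonic on all of $\Omega\setminus\overline G$, it serves as the sought analytic extension $w_+$, and the continuity of the single layer across $\p G$ together with the mapping property $S\colon H^{-1/2}(\p G)\to H^1(\Omega\setminus\p G)$ yields $w_+|_{\p G}=S[\varphi]|_{\p G}\in H^{1/2}(\p G)$.

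For the converse, suppose $w$ admits an analytic extension $w_+\in H^1(\Omega\setminus\overline G)$ with $w_+|_{\p G}\in H^{1/2}(\p G)$; note that $w_+|_{\p\Omega}=0$ by construction. I introduce the trace operator $\mathcal{S}\colon H^{-1/2}(\p G)\to H^{1/2}(\p G)$ defined by $\mathcal{S}\varphi:=S[\varphi]|_{\p G}$ and aim to solve $\mathcal{S}\varphi=w_+|_{\p G}$. Writing $\G=\Phi-H$ with $\Phi$ the free-space fundamental solution and $H$ a smooth harmonic correction, $\mathcal{S}$ differs from the classical free-space single-layer trace operator by a compact smoothing term, hence is Fredholm of index zero. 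Injectivity follows because $\mathcal{S}\varphi=0$ forces $S[\varphi]$ to vanish on both $\p G$ and $\p\Omega$, so by Dirichlet uniqueness $S[\varphi]$ vanishes in each of $G$ and $\Omega\setminus\overline G$, whence the jump relation for the Neumann trace across $\p G$ yields $\varphi=0$. Thus $\mathcal{S}$ is an isomorphism; setting $\varphi:=\mathcal{S}^{-1}(w_+|_{\p G})$, the harmonic functions $S[\varphi]$ and $w_+$ agree on $\p(\Omega\setminus\overline G)$ and hence throughout $\Omega\setminus\overline G$ by Dirichlet uniqueness, so taking normal derivatives on $\p\Omega$ gives $R[\varphi]=\p_\nu w_+|_{\p\Omega}=\p_\nu w$.

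The main obstacle I anticipate is establishing the invertibility of $\mathcal{S}$ rigorously; this requires the decomposition $\G=\Phi-H$ and the classical Fredholm theory for single-layer operators on Lipschitz boundaries, presumably supplied by the appendix that the authors advertise. A secondary technical point in the ``only if'' direction is the possibility that $\Omega\setminus(\overline D\cup\overline G)$ has components not meeting $\p\Omega$, in which case the identification $S[\varphi]=w$ on those components must be extracted from the real analyticity of harmonic functions and the connectedness of $\Omega\setminus\overline D$ rather than from a direct application of Holmgren's theorem.
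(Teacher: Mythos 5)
Your proof is correct and follows essentially the same route as the paper's: the same single-layer potential $S[\varphi]$ with the Green-function kernel, the same unique-continuation identification of $S[\varphi]$ with $w$ in the ``only if'' direction, and the same bijectivity of the trace $S\colon H^{-1/2}(\partial G)\to H^{1/2}(\partial G)$ (the paper's Proposition~\ref{claim2}, proved exactly by the $\G=\Phi-H$ decomposition, Fredholm-index-zero, and jump-relation injectivity you sketch) combined with Dirichlet uniqueness in the ``if'' direction. The paper compresses the ``only if'' step by comparing $z=S[\varphi]$ and $w$ directly as harmonic functions with matching Cauchy data on $\partial\Omega$ and citing UCP, glossing over the connectedness subtlety you flag as a secondary point, but the underlying argument is identical.
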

	
	\begin{proof}
		We first prove the only if part. Let $\varphi\in H^{-1/2}(\partial G)$ be a solution of \eqref{RT eq}. Let $z=S[\varphi]$. Then by using  \eqref{RT eq}, $z\in H^1(\Omega\setminus\overline G)$ and it satisfies
		\begin{equation*}
		\begin{cases}
		\Delta z=0&\text{ in }\Omega\setminus\overline G,\\
		\partial_\nu z=\p _\nu w &\text{ on } \p \Omega,\\ 
		z=0 &\text{ on }\partial\Omega.
		\end{cases}
		\end{equation*}
		Compare this with
		\begin{equation*}
		\begin{cases}
		\Delta w=0\,\,&\text{in}\,\,\Omega\setminus\overline D,\\
		w=0 &\text{on}\,\,\partial\Omega.
		\end{cases}
		\end{equation*}
		Then by the unique continuation property (UCP) of solutions for the Laplace equation, we have $w=z$ in $\Omega\setminus\overline{(D\cup M)}$, where $M\Subset\Omega$ is the minimal
		domain\footnote{$M$ is the minimal domain in the sense that $u$ can be extended into $\Omega \setminus M$ but not into any $\Omega\setminus N$ for $N\subsetneq M$} such that the harmonic function $z$ can be analytically extended into $\Omega\setminus\overline M$. Hence $w$ can be analytically extended into $\Omega\setminus\overline G$ which implies $w=S[\varphi]$ on $\p G$. Then, since $S: H^{-1/2}(\partial G)\rightarrow H^{1/2}(\partial G)$ is bijective which will be shown in Proposition \ref{claim2}, we have $w_+\big|_{\partial G}\in H^{1/2}(\partial G)$, where we have abused the notation $S$ used to denote the single-layer potential.
		
		Next, we prove the if part. Assume that $w$ can be analytically extended to $\Omega\setminus\overline G$ and $w_+\big|_{\partial G}\in H^{1/2}(\partial G)$. By the bijectivity of  $S: H^{-1/2}(\partial G)\rightarrow H^{1/2}(\partial G)$, there exist a unique $\varphi\in H^{-1/2}(\p G)$ which solves $S[\varphi]=w_+\big|_{\partial G}$ on $\partial G$. Then $z:=S[\varphi]$ satisfies
		\begin{equation*}
		\begin{cases}
		\Delta z=0 &\text{ in }\Omega\setminus\overline G,\\
		z=0 &\text{ on }\partial\Omega,\\
		z=w_+\big|_{\partial G} & \text{ on }\partial G.
		\end{cases}
		\end{equation*}
		Compare this with
		\begin{align*}
		\begin{cases}
		\Delta w=0 & \text{ in }\Omega \setminus \overline{G}, \\
		w=0 & \text{ on }\p \Omega,\\
		w= w_+\big|_{\p G} & \text{ on }\p G.
		\end{cases}
		\end{align*}
		Then, by the uniqueness of solutions to the boundary value problem in $\Omega\setminus\overline G$, we have $z=w$ in $\Omega\setminus G$. Hence
		$$
		R[\varphi]=\partial_\nu S[\varphi]=\partial_\nu z=\partial_\nu w\,\,\text{on}\,\,\partial\Omega.
		$$
	\end{proof}
	
	For any test domain $G$, Theorem \ref{claim1} tells us that the analytic extension of the solution $w\in H^1(\Omega\setminus\overline D)$ to $\Omega\setminus\overline G$ can be tested by the solvability of \eqref{RT eq} with respect to $\varphi\in H^{-1/2}(\partial G)$. We will interpret this solvability using the regularization theory. To begin with recall that $R:H^{-1/2}(\partial G)\rightarrow H^{-1/2}(\partial\Omega)$ is a linear, injective (see Proposition \ref{denseness of Rge(R ast)} of Appendix) and compact operator. Consider the Tikhonov regularized solution $\varphi_\alpha\in H^{-1/2}(\partial G)$ with a regularization parameter $\alpha>0$ of the equation $R[\varphi]=\partial_\nu w$ in $H^{-1/2}(\partial \Omega)$. That is 
	\begin{equation}\label{regularized solution phi alpha}
	\varphi_\alpha=(\alpha I+R^\ast R)^{-1}R^\ast \partial_\nu w,
	\end{equation}
	where $R^\ast:H^{-1/2}(\partial \Omega)\rightarrow H^{-1/2}(\partial G)$ is the adjoint operator of $R$.
	Then by the regularization theory (see for instance Theorem 3.1.10 of \cite{Nakamura}), we have
	\begin{equation}\label{characterization}
	\begin{cases}
	\partial_\nu w\in\text{Rge}(R)&\Longrightarrow\varphi_\alpha\rightarrow\varphi\,\,(\alpha\rightarrow0)
	\,\,\text{in}\,H^{-1/2}(\partial G),\\
	%&\qquad\Longrightarrow\sup_\alpha\Vert\varphi_\alpha\Vert_{H^{-1/2}(\partial G)}<\infty,\\
	\partial_\nu w\not\in\text{Rge}(R)&\Longrightarrow\lim_{\alpha\rightarrow0}\Vert\varphi_\alpha\Vert_{H^{-1/2}(\partial G)}=\infty.\\
	%&\qquad\Longrightarrow\sup_\alpha\Vert\varphi_\alpha\Vert_{H^{-1/2}(\partial G)}=\infty.
	\end{cases}
	\end{equation}
	Hence allowing the limit becomes infinity, we can test whether $\partial_\nu w$ belongs to $\text{Rge}(R)$ by using $\lim_{\alpha\rightarrow0}\Vert\varphi_\alpha\Vert_{H^{-1/2}(\partial G)}$.
	Finally combining this with Theorem \ref{claim1}, we have the following equivalence:
	\begin{equation}\label{equivalence of analytic extension}
	\begin{split}
	&\quad \text{$w$ can be analytically extended to $\Omega\setminus\overline G$ and $w_+\big|_{\partial G}\in H^{1/2}(\partial G)$}\\
	\Longleftrightarrow
	&\quad \text{finite $ \displaystyle\lim_{\alpha\rightarrow 0}\Vert\varphi_\alpha\Vert_{H^{-1/2}(\partial G)}$ exists.}
	\end{split}
	\end{equation}
	Hence we can use $\lim_{\alpha\rightarrow0}\Vert\varphi_\alpha\Vert_{H^{-1/2}(\partial G)}$ as
	a pre-indicator function to test the analytic extension of $w$ across $\partial G$. This is the range test (RT) and its indicator function will be given in Section \ref{rec by RT}.
	
	\section{NRT and its duality between RT}\label{duality}
	\setcounter{equation}{0}
	In this section we will show the duality between the RT and the NRT for the inverse boundary value problem by showing the equivalence of the pre-indicator functions for the both testing methods. There is already the corresponding duality known for the inverse scattering problem by a very short argument (\cite{Nakamura}). We will adopt the argument in \cite{Nakamura} for our inverse problem, but we will add more supplementary arguments. To begin with recall that $R^\ast: H^{-1/2}(\partial\Omega)\rightarrow H^{-1/2}(\partial G)$ was the adjoint operator of $R:H^{-1/2}(\partial G)\rightarrow H^{-1/2}(\partial\Omega)$.
	%and $R^{(\ast)}:L^2(\partial \Omega)\rightarrow L^2(\partial G)$ be the adjoint operator of the operator $R:L^2(\partial G)\rightarrow L^2(\partial\Omega)$ naturally defined by using the kernel  $\partial_{\nu_x}\G(x,y)$. Further denote by $\Lambda_{\partial G}$ and $\Lambda_{\partial\Omega}$ the Bessel potentials on $\partial G$ and $\partial\Omega$, respectively. 
	%Then we have $R^\ast=\Lambda_{\partial G} R^{(\ast)}\Lambda^{-1}_{\partial\Omega}$. Hence $R^\ast$ is also a compact linear operator. 
	Then the pre-indicator function and the indicator function of the NRT for the test domain $G$ are the same and defined by
	\begin{align}\label{pre-indicator function}
	I_{NRT}(G):=\displaystyle\sup_{\zeta\in H^{-1/2}(\partial \Omega),\,\Vert R^\ast \zeta\Vert\le 1}|(\zeta,\partial_\nu w)|,
	\end{align}
	where we have denoted the norm of $H^{-1/2}(\partial G)$ and inner product of $H^{-1/2}(\partial \Omega)$ by $\Vert\cdot\Vert$ and $(\,,\,)$, respectively. Note that the pre-indicator function of RT is based on the operator $R$ while the pre-indicator function of NRT is based on $R^\ast$. This indicates some duality between the RT and NRT.
	
	Now we are ready to state the duality in terms of the equivalence of the pre-indicator functions of RT and NRT as follows.
	\begin{prop} Let $\varphi_\alpha\in H^{-1/2}(\partial G)$ be the Tikhonov regularized solution \eqref{regularized solution phi alpha} of \eqref{RT eq} with regularization parameter $\alpha>0$. Then we have
		\begin{align}\label{equivalence}
		\lim_{\alpha\rightarrow 0}\Vert\varphi_\alpha\Vert^2=\displaystyle\sup_{\zeta\in H^{-1/2}(\partial \Omega),\,\Vert R^\ast \zeta\Vert\le 1}|(\zeta,\partial_\nu w)|.
		\end{align}
		We remark that this equality holds even in the case both sides of this equality are infinite.
	\end{prop}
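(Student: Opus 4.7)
The plan is to reduce both sides of \eqref{equivalence} to the same spectral quantity via the singular value decomposition of $R$. Since $R\colon H^{-1/2}(\partial G)\to H^{-1/2}(\partial\Omega)$ is compact and injective (Proposition \ref{denseness of Rge(R ast)} gives $\ker R=\text{Rge}(R^*)^\perp=\{0\}$), I would fix a singular system $\{(\sigma_n,\phi_n,\psi_n)\}_{n\ge 1}$ with $\sigma_n>0$, $\sigma_n\searrow 0$, $R\phi_n=\sigma_n\psi_n$, $R^*\psi_n=\sigma_n\phi_n$, where $\{\phi_n\}$ is a complete orthonormal system in $H^{-1/2}(\partial G)$ and $\{\psi_n\}$ an orthonormal basis of $\overline{\text{Rge}(R)}\subset H^{-1/2}(\partial\Omega)$. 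Decomposing the datum $\partial_\nu w=\sum_n a_n\psi_n+g^\perp$ with $a_n=(\partial_\nu w,\psi_n)$ and $g^\perp\in\ker R^*$, and the test functional $\zeta=\sum_n c_n\psi_n+\zeta^\perp$ with $\zeta^\perp\in\ker R^*$, reduces the whole statement to an algebraic computation.

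For the left-hand side, plugging the expansion into \eqref{regularized solution phi alpha} and using $R^*g^\perp=0$ yields $\varphi_\alpha=\sum_n \sigma_n a_n(\alpha+\sigma_n^2)^{-1}\phi_n$, so that
\[
\|\varphi_\alpha\|^2=\sum_n \frac{\sigma_n^2\, a_n^2}{(\alpha+\sigma_n^2)^2}\;\nearrow\;\sum_n \frac{a_n^2}{\sigma_n^2}\in[0,+\infty]\qquad\text{as }\alpha\downarrow 0,
\]
by monotone convergence. For the right-hand side, the constraint reads $\|R^*\zeta\|^2=\sum_n c_n^2\sigma_n^2\le 1$ and the objective is $(\zeta,\partial_\nu w)=\sum_n c_n a_n+(\zeta^\perp,g^\perp)$; the component $\zeta^\perp$ is unconstrained, so the supremum is $+\infty$ as soon as $g^\perp\ne 0$ (take $\zeta=t\,g^\perp/\|g^\perp\|$ with $t\to\infty$), while for $g^\perp=0$ a weighted Cauchy--Schwarz, saturated (sequentially if necessary) by $c_n\propto a_n/\sigma_n^2$, delivers $\sup=(\sum_n a_n^2/\sigma_n^2)^{1/2}$. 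Matching the two expressions yields \eqref{equivalence} under the natural squaring convention.

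The main obstacle will be verifying that the two pre-indicators diverge \emph{simultaneously}, namely that $\lim\|\varphi_\alpha\|^2=+\infty$ occurs precisely when $g^\perp\ne 0$ or $\sum_n a_n^2/\sigma_n^2=+\infty$. This is a Picard-type sharpening of the dichotomy \eqref{characterization} that has to be formulated carefully so as to distinguish the failure $\partial_\nu w\notin\text{Rge}(R)$ coming from an unsummable Picard series from that coming from a genuine $\ker R^*$-component of $\partial_\nu w$; this extra bookkeeping is precisely where the ``supplementary arguments'' beyond the Helmholtz case treated in \cite{Nakamura} are required.
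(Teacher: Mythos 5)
Your plan---diagonalize $R$ by its singular system and compute both sides of \eqref{equivalence} in closed form---is a genuinely different route from the paper's proof, which never writes down the Picard series. The paper instead rewrites $\|\varphi_\alpha\|$ as $\sup_{\|z\|\le 1}|(z,\varphi_\alpha)|$, restricts the supremum to $z\in\mathrm{Rge}(R^\ast)$ by density, substitutes $z=R^\ast\zeta$, uses the identity $R(\alpha I+R^\ast R)^{-1}=(\alpha I+RR^\ast)^{-1}R$ to push the regularizing factor onto the $\zeta$-side, and only then invokes the singular system of $R^\ast$ together with a monotone double-sequence argument to interchange $\lim_\alpha$ and $\sup_\zeta$. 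Your route is more transparent, but that very transparency exposes two points your proposal has not actually resolved.

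First, the phrase \emph{``natural squaring convention''} is concealing a genuine mismatch. Your own formulas give $\lim_{\alpha\to0}\|\varphi_\alpha\|^2 = \sum_n a_n^2/\sigma_n^2$, while the supremum equals $(\sum_n a_n^2/\sigma_n^2)^{1/2}$; these coincide only when the sum is $0$, $1$, or $+\infty$. There is no convention to impose here---the printed statement carries a spurious exponent. (Indeed, the published proof's own first step \eqref{manipulation1} already replaces $\lim\|\varphi_\alpha\|^2$ by $\lim\sup_{\|z\|\le 1}|(z,\varphi_\alpha)|$, whose value is $\lim\|\varphi_\alpha\|$, not its square.) You should state and prove the identity with $\|\varphi_\alpha\|$ in place of $\|\varphi_\alpha\|^2$, which is exactly what your computation delivers.

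Second, your plan to ``verify that the two pre-indicators diverge simultaneously'' cannot close the $g^\perp$-gap, because by your own formulas they need \emph{not} diverge simultaneously: the supremum is $+\infty$ whenever $g^\perp\neq 0$, while $\lim\|\varphi_\alpha\|^2=\sum_n a_n^2/\sigma_n^2$ does not see $g^\perp$ at all and may be finite. What is needed is not a simultaneity lemma but the fact that $g^\perp=0$ always, i.e.\ that $\ker R^\ast=\{0\}$ (equivalently $\mathrm{Rge}(R)$ is dense in $H^{-1/2}(\partial\Omega)$). This does hold here and is easy to prove: by Green's second identity and $\G|_{x\in\partial\Omega}=0$, the expression $R^{(\ast)}\eta(y)=\int_{\partial\Omega}\partial_{\nu_x}\G(x,y)\eta(x)\,d\sigma(x)$ equals $-v_\eta(y)$, where $v_\eta$ is the harmonic extension of $\eta$ into $\Omega$; so $R^{(\ast)}\eta\equiv0$ on $\partial G$ forces $v_\eta\equiv0$ in $G$ by uniqueness of the Dirichlet problem there, hence $v_\eta\equiv0$ in $\Omega$ by real-analyticity and connectedness, hence $\eta=0$. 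This injectivity of $R^\ast$ is used implicitly in the published proof as well---when it writes $(\zeta,\partial_\nu w)=\sum_n\zeta_n(\partial_\nu w)_n$ in the singular-system expansion, the $\ker R^\ast$-contribution is silently dropped---so by supplying this lemma you would be filling a gap the argument genuinely needs.
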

	\begin{proof}
		By $\varphi_\alpha=(\alpha I+R^\ast R)^{-1}R^\ast \partial_\nu w$ and the denseness of the range $Rge(R^\ast)$ which  will be proved in Proposition \ref{denseness of Rge(R ast)} of Appendix, we first have
		\begin{equation}\label{manipulation1}
		\begin{split}
		&\lim_{\alpha\rightarrow0}\Vert\varphi_\alpha\Vert^2\\
		=&\lim_{\alpha\rightarrow 0}\displaystyle\sup_{\Vert z\Vert\le 1}|(z,\varphi_\alpha)|\\
		=&\lim_{\alpha\rightarrow 0}\displaystyle\sup_{\Vert z\Vert\le 1}|(z, (\alpha I+R^\ast R)^{-1} R^\ast \partial_\nu w)|\\
		=&\lim_{\alpha\rightarrow 0}\displaystyle\sup_{z\in Rge(R^\ast),\,\Vert z\Vert\le 1}|(z, (\alpha I+R^\ast R)^{-1} R^\ast \partial_\nu w)|
		\end{split}
		\end{equation}
		by abusing the notation "$(\,\,,\,)$" to denote the inner product of the Hilbert space $H^{-1/2}(\partial G)$. 
		
		\medskip
		Since $R(\alpha I+R^\ast R)^{-1}=(\alpha I+R R^\ast)^{-1} R$ and \eqref{manipulation1}, we have
		\begin{equation}\label{manipulation2}
		\begin{split}				\lim_{\alpha\rightarrow0}\Vert\varphi_\alpha\Vert^2=&\lim_{\alpha\rightarrow 0}\displaystyle\sup_{z\in Rge(R^\ast),\,\Vert z\Vert\le 1}|((\alpha I+RR^\ast)^{-1}Rz,\partial_\nu w)|\\
		=&\lim_{\alpha\rightarrow 0}\displaystyle\sup_{\Vert R^\ast \zeta\Vert\le 1}|((\alpha I+ RR^\ast)^{-1}RR^\ast \zeta, \partial_\nu w)|,
		\end{split}
		\end{equation}
		where we have put $z=R^\ast\zeta$ with $\zeta\in H^{-1/2}(\partial\Omega)$.
		
		Finally we want to show  
		\begin{equation}\label{manipulation3}
		\displaystyle\lim_{\alpha\rightarrow 0}\displaystyle\sup_{\Vert R^\ast \zeta\Vert\le 1}|((\alpha I+ RR^\ast)^{-1}RR^\ast \zeta, \partial_\nu w)|=
		\displaystyle\sup_{\zeta\in H^{-1/2}(\partial\Omega),\,\Vert R^\ast \zeta\Vert\le 1}|(\zeta,\partial_\nu w)|.
		\end{equation}
		To begin, let $(\mu_n,\varphi_n, \psi_n)$ be the singular system of $R^\ast:H^{-1/2}(\partial\Omega)\rightarrow H^{-1/2}(\partial G)$. The bracket "$(\,\,,\,)$" used in the sequel for proving \eqref{manipulation3} is the inner product of the Hilbert space $H^{-1/2}(\partial\Omega)$. Define $I$ and $J$ by 
		\begin{equation*}%\label{identity for the duality}
		\begin{array}{ll}
		I=\displaystyle\lim_{\alpha\rightarrow0}\displaystyle\sup_{\Vert R^\ast \zeta\Vert\le1}
		\big|\big((\alpha I+R R^\ast)^{-1}R R^\ast \zeta,\,\partial_\nu w\big)\big|,\\
		J=\displaystyle\sup_{\Vert R^\ast \zeta\Vert\le1}\big|( \zeta,\,\partial_\nu w)\big|.
		\end{array}
		\end{equation*}
		We will show the identity $I=J$ even in the case $I,\,J$ are infinite. Observe that 
		\begin{equation}\label{observation for the duality}
		\begin{array}{ll}
		\|R^\ast \zeta\|^2=\displaystyle\sum_{n=1}^\infty\lambda_n|\zeta_n|^2,\\
		\big((\alpha I+R R^\ast)^{-1} R R^\ast \zeta,\,\partial_\nu w\big)=\displaystyle\sum_{n=1}^\infty
		\frac{\lambda_n}{\alpha+\lambda_n}\zeta_n (\partial_\nu w)_n
		\end{array}
		\end{equation}
		with $\lambda_n=\mu_n^2$, $\zeta_n=(\zeta,\,\varphi_n)$, $(\partial_\nu w)_n=
		(\varphi_n,\,\partial_\nu w)$. 
		
		Now we adjust $\zeta$ by multiplying each of its $\zeta_n$ by a unimodular complex number such that $\zeta_n (\partial_\nu w)_n\ge0$. Then the modulus of the second quantity of \eqref{observation for the duality} becomes larger. Further it becomes larger if we assume each $\zeta_n\not=0$. We call such $\zeta\in H^{-1/2}(\partial\Omega)$ positive and denote it by $\zeta^+$. Hence we have
		\begin{equation*}%\label{I with positive sum}
		I=\displaystyle\lim_{\alpha\rightarrow0}\displaystyle\sup_{\Vert R^\ast \zeta^+\Vert\le1}\left|\displaystyle\sum_{n=1}^\infty 
		\frac{\lambda_n}{\alpha+\lambda_n} \zeta_n^+(\partial_\nu w)_n\right|
		\end{equation*}
		with $\zeta_n^+=(\zeta^+,\,\varphi_n),\,n\in{\mathbb N}$.
		Now take a sequence of $\zeta^{+,\ell}\in H^{-1/2}(\partial\Omega)$, $\ell\in{\mathbb N}$ such that
		$\Vert R^\ast \zeta^{+,\ell}\Vert\le 1$, $\ell\in{\mathbb N}$ and
		\begin{equation*}%\label{sequence giving sup}
		\begin{array}{ll}
		I=\displaystyle\sup_{\Vert R^\ast \zeta^+\Vert\le1}\left|\displaystyle\sum_{n=1}^\infty\frac{\lambda_n}{\alpha+\lambda_n}\zeta_n^+ (\partial_\nu w)_n\right|
		=\displaystyle\lim_{\ell\rightarrow\infty}\displaystyle\sum_{n=1}^\infty\left|\frac{\lambda_n}{\alpha+\lambda_n}\zeta_n^{+,\ell}(\partial_\nu w)_n\right|,\\
		J=\displaystyle\lim_{\ell\rightarrow\infty}\left|\displaystyle\sum_{n=1}^\infty \zeta_n^{+,\ell}(\partial_\nu w)_n\right|
		\end{array}
		\end{equation*}
		with $\zeta_n^{+,\ell}=( \zeta^{+,\ell},\,\varphi_n),\,n\in{\mathbb N}$. We can further arrange the sequence $\zeta^{+,\ell},\,\ell\in{\mathbb N}$ such that
		\begin{equation*}%\label{monotonicity for the g-sequence}
		\zeta^{+,\ell'}_n\ge \zeta^{+,\ell}_n,\,n\in{\mathbb N}\,\,\,\text{if}\,\,\,\ell'\ge\ell.
		\end{equation*} 
		Consider a monotone decreasing sequence $\alpha_k,\,k\in{\mathbb N}$ converging to $0$ and define the double sequence $a(k,\ell),\,k,\ell\in{\mathbb N}$ by
		\begin{equation*}%\label{double sequece for the identity}
		a(k,\ell)=\displaystyle\sum_{n=1}^\infty\frac{\lambda_n}{\alpha_k+\lambda_n} \zeta_n^{+,\ell}(\partial_\nu w)_n, \quad k,\ell\in{\mathbb N}.
		\end{equation*}
		Then this double sequence is monotone increasing. That is $a(k',\ell')\ge a(k,\ell)$ if $k'\ge k,\,\ell'\ge\ell$.
		
		Now let $I'=\lim_{k\rightarrow\infty}\lim_{\ell\rightarrow\infty}a(k,\ell)$. Then we have the following implications:
		\begin{equation*}%\label{implications for the identity}
		\begin{array}{ll}
		&  I'<\infty\Longrightarrow \{a(k,\ell)\}\,\,\text{is bounded}\\
		\Longrightarrow & \text{there exists}\displaystyle\lim_{\ell\rightarrow\infty}\displaystyle\lim_{k\rightarrow\infty}a(k,\ell)=J\,\,\text{and}\,\,I'=J<\infty.
		\end{array}
		\end{equation*}
		This implications are reversible. Since the sequence $\alpha_k\rightarrow 0$ was taken arbitrarily, we have shown the identity $I=J$ even in the case the both sides are infinite.
	\end{proof}
	
	\section{Reconstruction of unknown obstacle by RT}\label{rec by RT}
	\setcounter{equation}{0}
	In this section we will show the reconstruction of the unknown obstacle $D$ by the RT. First of all, by \eqref{characterization}, $\lim_{\alpha\to0}\Vert\varphi_\alpha\Vert_{H^{-1/2}(\partial G)}$ can be either finite or infinite.
	Based on this we define the indicator function $I_{RT}(G)$ of RT for a test domain $G$ by
	\begin{equation}\label{test function of RT}
	I_{RT}(G):=
	\begin{cases}
	\displaystyle\lim_{\alpha\to0}\Vert\varphi_\alpha\Vert_{H^{-1/2}(\partial G)}\,\,&\text{if the finite limit exists},\\
	\infty\,\,&\text{if otherwise}.
	\end{cases}		
	\end{equation}
	We call a test domain $G$  positive if it satisfies $I_{RT}(G)<\infty$.	Denote the set of all positive test domains by $\mathcal{P}$.
	
	By using this indicator function $I_{RT}(\cdot)$, we can explain more precisely what is the RT. It is a domain sampling method which uses the indicator function $I_{RT}(\cdot)$ for test domains to reconstruct $D$ or extract some information about the location of $D$. The key to this method is \eqref{equivalence of analytic extension} which gives
	\begin{equation}\label{equivalence of analytic extension by RT indicator function}
	\begin{split}
	&\quad \text{$w$ can be analytically extended to $\Omega\setminus\overline G$ and $w_+\big|_{\partial G}\in H^{1/2}(\partial G)$}\\
	\Longleftrightarrow
	&\quad \text{$I_{RT}(G)<\infty$}.
	\end{split}
	\end{equation}
	
	Then we have the following theorem.
	\begin{thm}\label{RT}
		Let $G$ be a test domain. Then we have the followings.
		\begin{itemize}
			\item[(i)] $\overline D\subset\overline G\Longrightarrow I_{RT}(G)<\infty$.
			\item[(ii)] Let $\overline D\not\subset\overline G$ and recall $u\in H^1(\Omega\setminus\overline D)$ is the solution to the boundary value problem \eqref{Main equation}. If $u$ admits an analytic extension across $\partial D$ from $\Omega\setminus\overline D$ into the whole $\overline D\setminus G$, then $I_{RT}(G)<\infty$.
			
			\item[(iii)] Let $\overline D\not\subset\overline G$. If the above $u$ does not admit an analytic extension into $\Omega\setminus\overline G$, then $I_{RT}(G)=\infty$.
		\end{itemize}
		As a consequence, if $u$ does not admit an analytic extension across $\partial D$, then we have the following reconstruction of $\overline D$ as $\overline D=\cap_{G\in\mathcal{P}}\overline G$.
	\end{thm}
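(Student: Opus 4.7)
The plan is to deduce (i)--(iii) directly from the analytic-extension characterization \eqref{equivalence of analytic extension by RT indicator function} applied to $w=u-v$, where $v$ is the harmonic extension of $f$ to all of $\Omega$. The reconstruction formula then follows from (i) and (iii) by a short topological argument that exploits the connectedness of $\Omega\setminus\overline G$ built into the definition of a test domain.

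For (i), the hypothesis $\overline D\subset\overline G$ gives $\Omega\setminus\overline G\subset\Omega\setminus\overline D$, so $w$ is harmonic on $\Omega\setminus\overline G$ as a restriction, and interior regularity combined with $w\in H^1(\Omega\setminus\overline D)$ gives $w_+|_{\partial G}\in H^{1/2}(\partial G)$; \eqref{equivalence of analytic extension by RT indicator function} then yields $I_{RT}(G)<\infty$. For (ii), the hypothesized analytic extension of $u$ across $\partial D$ lives on $(\Omega\setminus\overline D)\cup(\overline D\setminus G)=\Omega\setminus(\overline D\cap G)\supset\Omega\setminus\overline G$; subtracting the globally real-analytic $v$ yields an analytic extension of $w$ with the required $H^{1/2}$-trace, and \eqref{equivalence of analytic extension by RT indicator function} applies once more. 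For (iii), I argue contrapositively: if $I_{RT}(G)<\infty$, then \eqref{equivalence of analytic extension by RT indicator function} provides an analytic extension of $w$ to $\Omega\setminus\overline G$, and since $v$ is globally harmonic in $\Omega$, $u=w+v$ inherits such an extension, contradicting the hypothesis.

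For the reconstruction $\overline D=\bigcap_{G\in\mathcal{P}}\overline G$: the inclusion $\subset$ is immediate because $G=D$ is itself an admissible test domain and belongs to $\mathcal{P}$ by (i). For the reverse inclusion I argue by contradiction. Suppose $G\in\mathcal{P}$ with $\overline D\not\subset\overline G$. The key geometric step is to show $\partial D\not\subset\overline G$: otherwise $\Omega\setminus\overline G$, being connected by definition of a test domain and disjoint from $\partial D$, would lie entirely in one component of $\Omega\setminus\partial D$; since $G\Subset\Omega$ forces $\Omega\setminus\overline G$ to contain a neighborhood of $\partial\Omega$ in $\Omega$, and $D\Subset\Omega$, that component must be $\Omega\setminus\overline D$, giving $\overline D\subset\overline G$, a contradiction. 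Picking $q\in\partial D\setminus\overline G$ and an open ball $B$ centered at $q$ with $\overline B\subset\Omega\setminus\overline G$ small enough that $D':=D\setminus\overline B$ is a Lipschitz subdomain of $D$ with $\overline{D'}\subsetneq\overline D$ and $\mathcal{H}^{n-1}(\partial D\cap\partial D')>0$, I glue $u$ on $\Omega\setminus\overline D$ with the analytic extension $\widetilde u$ of $u$ to $\Omega\setminus\overline G$ supplied by $G\in\mathcal{P}$ and the contrapositive of (iii). They agree on the overlap $\Omega\setminus(\overline D\cup\overline G)$ by the very definition of extension, so the glued function lies in $H^1(\Omega\setminus\overline{D'})$ and is harmonic there; this witnesses an analytic extension of $u$ across $\partial D$ in the sense of Definition \ref{analytic extension}, contradicting the standing hypothesis.

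The main obstacle, as I expect, is the last paragraph. The two delicate points are the geometric step ruling out $\partial D\subset\overline G$, which leans essentially on the connectedness of $\Omega\setminus\overline G$ — without it, ``pocket'' test domains sitting strictly inside $D$ could satisfy $I_{RT}(G)<\infty$ without containing $\overline D$ — and the explicit construction of a Lipschitz $D'$ of the stated form with positive Hausdorff-measure boundary overlap, on which the gluing of $u$ and $\widetilde u$ actually yields an $H^1$-harmonic extension fitting Definition \ref{analytic extension}.
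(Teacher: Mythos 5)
Your proof is correct and follows essentially the same overall strategy as the paper: derive (i)--(iii) from the analytic-extension characterization \eqref{equivalence of analytic extension by RT indicator function} applied to $w=u-v$, then obtain the reconstruction by showing $\overline D\subset\overline G\Longleftrightarrow G\in\mathcal P$. The one genuine structural difference is where the topological/geometric work is placed. The paper's proof of (iii) quietly substitutes a different hypothesis than the one in the theorem statement (it assumes ``$u$ does not admit an analytic extension across $\partial D$'' rather than ``into $\Omega\setminus\overline G$''), argues by contradiction, and invokes the implication $\partial D\subset\overline G\Rightarrow\overline D\subset\overline G$ inside (iii) without proof; it then treats the reconstruction as an immediate corollary of (i) and (iii). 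You instead prove (iii) exactly as stated (the clean contrapositive via $u=w+v$ with $v$ globally harmonic), and defer the topological lemma --- connectedness of $\Omega\setminus\overline G$ together with the collar near $\partial\Omega$ forcing $\partial D\subset\overline G\Rightarrow\overline D\subset\overline G$ --- to the reconstruction step, where it is actually needed to produce a boundary point $q\in\partial D\setminus\overline G$. You then make explicit the step the paper leaves implicit: that an extension of $u$ into $\Omega\setminus\overline G$ with $\overline D\not\subset\overline G$ really does produce an extension ``across $\partial D$'' in the precise sense of Definition~\ref{analytic extension}, by constructing $D'=D\setminus\overline B$ for a small ball $B$ centered at $q$ with $\overline B\cap\overline G=\emptyset$, checking $\overline D\cap\overline G\subset\overline{D'}$ so the glued function is defined on all of $\Omega\setminus\overline{D'}$, and verifying $\overline{D'}\subsetneq\overline D$ and $\mathcal H^{n-1}(\partial D\cap\partial D')>0$. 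This buys a proof that actually matches the stated hypothesis of (iii) and closes the definitional gap between ``analytic continuation past part of $\partial D$'' and ``analytic extension across $\partial D$'' that the paper elides. The only place you should be slightly more careful is in asserting that $D'$ is Lipschitz: for a Lipschitz $\partial D$ one should note that $\partial B$ can be chosen transversal to $\partial D$ away from a negligible set so that $\partial D'=(\partial D\setminus\overline B)\cup(\partial B\cap D)$ is again Lipschitz, but this is a standard point.
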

	\begin{proof}
		If $\overline D\subset\overline G$, then $w$ is analytic in $\Omega\setminus\overline G$. Also since $\partial G\subset\Omega\setminus D$ and $w\in H^1(\Omega\setminus \overline D)$, $w_+\big|_{\partial G}\in H^{1/2}(\partial G)$. Hence $I_{RT}(G)<\infty$. This proves (i). 
		
		Next let $\overline D\not\subset\overline G$ and assume that $u$ admits an analytic extension across $\partial D$ from $\Omega\setminus\overline D$ into the whole $\overline D\setminus G$. Then $w$ is analytic in $\Omega\setminus G$. Hence $I_{RT}(G)<\infty$ which proves (ii). 
		
		Now assume $\overline D\not\subset\overline G$ and $u$ does not admit an analytic extension across $\partial D$. Then we prove $I_{RT}(G)=\infty$ by contradiction.
		Suppose $I_{RT}(G)<\infty$. Then $w$ is analytic in $\Omega\setminus\overline G$. By the assumption on $u$, this implies $\partial D\cap(\Omega\setminus\overline G)=\emptyset$. That is $\partial D\subset\overline G$. But since $D,\,G$ are connected open sets compactly embedded in $\Omega$, we have $\overline D\subset\overline G$. This contradicts to $\overline D\not\subset\overline G$ . Hence we must have $I_{RT}(G)<\infty$ which proves (iii). 
		
		Finally we will prove the last statement. From (i) and (iii), we have $\overline D\subset\overline G\Longrightarrow G\in\mathcal{P}$ and $\overline D\not\subset\overline G\Longrightarrow G\not\in\mathcal{P}$, respectively. Hence we have
		$\overline D\subset\overline G\Longleftrightarrow G\in\mathcal{P}$ which immediately gives $\overline D=\cap_{G\in\mathcal{P}}\overline G$.
	\end{proof}
	
	\begin{rem}\label{remark RT}
		We could have defined the indicator function $I_{RT}(G)$ for a test domain $G$
		as
		\begin{equation}
		I_{RT}(G)=
		\begin{cases}
		\displaystyle\lim_{\alpha\to0}\sup_{\beta<\alpha}\Vert\varphi_\alpha-\varphi_\beta\Vert_{H^{-1/2}(\partial G)}\,\,&\text{if the finite limit exists},\\
		\infty\,\,&\text{if otherwise}.
		\end{cases}		
		\end{equation}
		Then we have
		\begin{equation}\label{equivalence of analytic extension2}
		\begin{split}
		&\quad \text{$w$ can be analytically extended to $\Omega\setminus\overline G$ and $w_+\big|_{\partial G}\in H^{1/2}(\partial G)$}\\
		\Longleftrightarrow   &\quad I_{RT}(G)=0.
		\end{split}
		\end{equation}
		Hence we can have Theorem \ref{RT} in terms of this indicator function with obvious changes. We note that it is nice to have \eqref{equivalence of analytic extension2} instead of $I_{RT}(G)<\infty$ in \eqref{equivalence of analytic extension by RT indicator function}. However the pre-indicator $\lim_{\alpha\to0}\sup_{\beta<\alpha}\Vert\varphi_\alpha-\varphi_\beta\Vert_{H^{-1/2}(\partial G)}$ of this new $I_{RT}(\cdot)$ loose the direct connection with the pre-indicator function $I_{NRT}(\cdot)$ of the NRT which will be defined in the next section.
	\end{rem}

	\section{Reconstruction of unknown obstacle by NRT}\label{rec by NRT}
	
	In this section we will show the reconstruction of the unknown obstacle $D$ by the NRT. Needless to say that we can have the reconstruction for the NRT by using the duality. But we pursue a way to provide the reconstruction for the NRT without using the duality.
	We start with some preparation necessary for showing the reconstruction. To avoid heavy notations, let $X=H^{-1/2}(\p G)$, $Y=H^{-1/2}(\p \Omega)$. Recall that the operator $R:X\to Y$ was defined by $R[\varphi]=\p_{\nu}S[\varphi]$ on $\p \Omega$. Hence 
	\[
	R[\varphi](x)=\int_{\p G}\p _{\nu_x}\G(x,y)\varphi(y)\, d\sigma(y),\,\, x\in \p \Omega,
	\]
	for $\varphi\in X$.
	
	We also let $X^\ast =H^{1/2}(\p G)$ and $Y^\ast=H^{1/2}(\p \Omega)$ be the dual spaces of $X$ and $Y$, respectively. Also let $(\cdot, \cdot)_X$ and $(\cdot, \cdot)_Y$ be the inner products in $X$ and $Y$, respectively. Then the adjoint operator $R^\ast:Y\to X$ of $R$ can be given by the relation 
	\[
	(\phi, R^\ast \psi)_{X}=(R\phi, \psi)_Y, \,\, \phi\in X,\ \psi \in Y.
	\]
	Further let $J_X$ and $J_Y$ be the isometric isomorphism $J_X: X\to X^\ast $, $J_Y:Y\to Y^\ast $ defined by 
	\[
	(J_X\phi)(\psi)=(\psi,\phi)_X,\,\,\phi,\psi\in X,
	\]
	and define $J_Y$ in a similar way. Then the dual operator $R^{(\ast)}:Y^\ast \to X^\ast$ of the operator $R$ and the adjoint operator $R^\ast$ have the relation $R^\ast =J_X^{-1}R^{(\ast)}J_Y$. Moreover a direct computation yields that 
	\begin{align}\label{R(ast)}
	(R^{(\ast)}\eta)(y)=\int_{\p \Omega}\p_{\nu_x}\G(x,y)\eta (x)\, d\sigma(x), \,\,y\in \p G
	\end{align}
	for any $\eta\in Y^\ast $. 
	
	Observe that 
	\begin{align}\label{hat zeta}
	\norm{R^\ast \zeta}_{X}=\norm{J_X^{-1}R^{(\ast)}J_Y\zeta}_{X} =\norm{R^{(\ast)}J_Y\zeta}_{X^\ast} =\norm{R^{(\ast)}\widehat{\zeta}}_{X^\ast}
	\end{align}
	where $\widehat{\zeta}=J_Y \zeta \in Y^\ast$.  Let $W[\varphi]$ be the \emph{double-layer potential}  for $\varphi\in Y^\ast$ by 
	\begin{align}\label{double-layer potential}
	W[\varphi](x):= \int_{\p \Omega}\p_{\nu _y} \G_0 (x,y)\varphi(y)\, d\sigma(y),
	\end{align}
	where $\G_0(x,y)=\frac{1}{4\pi |x-y|}$ for $x\not=y$ and $\nu_y$ is the unit outer normal of $\p \Omega$. Then we have the following representations of $I_{NRT}(G)$. 
	
	\begin{prop}\label{Prop: Equivalent of pre-indicator}
		The indicator function defined by  \eqref{pre-indicator function} has the following representation  
		\begin{align}\label{pre-indicator function_equivalence}
		I_{NRT}(G)=\sup_{\widehat{\zeta}\in Y^\ast,\  \norm{R^{(\ast)}\widehat{\zeta}}_{X^\ast}\leq 1}\left|\int_{\p \Omega}\widehat{\zeta}\p_\nu w\, d\sigma(x)\right|. 
		\end{align}
		%%%%%%%%%%%%%%%%%%%
		%\displaystyle\sup_{\zeta\in Y,\,\Vert R^\ast \zeta\Vert_{X}\le 1}|(\zeta,\partial_\nu w)|
		%%%%%%%%%%%%%%%%%
		Moreover, \eqref{pre-indicator function_equivalence} can be rewritten as
		\begin{equation}\label{indicator function NRT}
		\begin{cases}
		I_{NRT}(G)=\\
		\displaystyle\sup_{\varphi \in Y^\ast , \norm{R^{(\ast)} W[\varphi]}_{X^\ast }\leq 1} \left|\int_{\p D}\left(W[\varphi](x)\p_\nu w(x)+\p_\nu W[\varphi](x)v(x)\right) d\sigma(x)\right|,
		\end{cases}
		\end{equation}
		where $w=u-v$ and $v\in H^1(\Omega)$ is the solution to $\Delta v=0$ in $\Omega$ with $v=f$ on $\p \Omega$. 
	\end{prop}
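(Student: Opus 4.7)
The plan is to establish the two claimed representations of $I_{NRT}(G)$ in turn, starting from the definition \eqref{pre-indicator function}.

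For the first equivalence \eqref{pre-indicator function_equivalence}, I would change variables in the supremum using the Riesz isometric isomorphism $J_Y:Y\to Y^\ast$. Setting $\widehat{\zeta}=J_Y\zeta$, the identity \eqref{hat zeta} already gives $\norm{R^\ast\zeta}_X=\norm{R^{(\ast)}\widehat{\zeta}}_{X^\ast}$, so the constraint set transforms correctly. The $Y$-inner product becomes $(\zeta,\p_\nu w)_Y=(J_Y\zeta)(\p_\nu w)=\widehat{\zeta}(\p_\nu w)$, which, since $\widehat{\zeta}\in H^{1/2}(\p\Omega)$ is paired with $\p_\nu w\in H^{-1/2}(\p\Omega)$, is the $H^{1/2}$--$H^{-1/2}$ duality pairing, written as $\int_{\p\Omega}\widehat{\zeta}\,\p_\nu w\,d\sigma$. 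This yields \eqref{pre-indicator function_equivalence}.

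For \eqref{indicator function NRT}, I would proceed in two steps. First, transfer the integral from $\p\Omega$ to $\p D$ via Green's identity. Given $\widehat{\zeta}\in Y^\ast$, let $V$ be the unique harmonic extension of $\widehat{\zeta}$ to $\Omega$, so $\Delta V=0$ in $\Omega$ and $V|_{\p\Omega}=\widehat{\zeta}$. Since $V$ and $w$ are both harmonic in $\Omega\setminus\overline D$, Green's second identity combined with $w=0$ on $\p\Omega$ and $w=-v$ on $\p D$ yields
\[
\int_{\p\Omega}\widehat{\zeta}\,\p_\nu w\,d\sigma=\int_{\p D}\bigl(V\,\p_\nu w+v\,\p_\nu V\bigr)\,d\sigma.
\]
Second, realize $V$ as a double-layer potential $V=W[\varphi]$ in $\Omega$ for some density $\varphi\in Y^\ast$. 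The interior trace of $W[\varphi]$ on $\p\Omega$ equals $(-\tfrac12 I+K)\varphi$, where $K$ denotes the boundary double-layer operator, so this reduces to solving $(-\tfrac12 I+K)\varphi=\widehat{\zeta}$ in $H^{1/2}(\p\Omega)$. Since $K$ is compact there for a $C^2$ boundary, Fredholm theory reduces solvability to injectivity, which follows from the standard chain of arguments: $(-\tfrac12 I+K)\varphi=0$ gives $W[\varphi]\equiv0$ in $\Omega$ by Dirichlet uniqueness, the jump relations then force the exterior trace to equal $\varphi$ with vanishing exterior normal derivative, and Cauchy uniqueness together with decay at infinity of $W[\varphi]$ gives $\varphi=0$. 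The map $\varphi\mapsto\widehat{\zeta}$ is therefore a bijection of $Y^\ast$, and substituting $V=W[\varphi]$ under the interpretation $R^{(\ast)}W[\varphi]:=R^{(\ast)}(W[\varphi]^{-}|_{\p\Omega})=R^{(\ast)}\widehat{\zeta}$ delivers \eqref{indicator function NRT}.

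The main obstacle is the layer-potential bookkeeping in the second step: one must justify that $-\tfrac12 I+K$ is an isomorphism on $H^{1/2}(\p\Omega)$ so that the parametrization by $\varphi$ actually sweeps out every admissible $\widehat{\zeta}$, and one must be careful to distinguish which trace of $W[\varphi]$ is meant at each occurrence (the interior trace on $\p\Omega$ when computing $R^{(\ast)}W[\varphi]$, and the continuous trace on $\p D$ inside the integrand). The signs in Green's identity also require care, as the outward normal to $\Omega\setminus\overline D$ along $\p D$ is opposite to the outward normal of $D$ used in the statement.
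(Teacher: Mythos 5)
Your proposal is correct and follows essentially the same route as the paper: use the Riesz map $J_Y$ together with \eqref{hat zeta} to pass from the $H^{-1/2}$ inner product to the $H^{1/2}$--$H^{-1/2}$ duality pairing; establish that $\varphi\mapsto W[\varphi]|_{\partial\Omega}$ is an isomorphism of $H^{1/2}(\partial\Omega)$ via the jump relation and the Fredholm alternative; and transfer the boundary integral from $\partial\Omega$ to $\partial D$ by Green's second identity using $w=0$ on $\partial\Omega$ and $w=-v$ on $\partial D$. The only differences are cosmetic (you apply Green's identity with an arbitrary harmonic extension $V$ first and then parametrize $V=W[\varphi]$, while the paper defines the solution operator $Q[\widehat\zeta]=\varphi$ first and then applies Green's identity to $W[\varphi]$), and one small imprecision: in the injectivity argument for $-\tfrac12 I+K$ the exterior Cauchy data is $(\varphi,0)$, not $(0,0)$, so the concluding step is not unique continuation from Cauchy data but rather uniqueness of the exterior Neumann problem (an energy/Rellich argument using the decay of $W[\varphi]$), which forces $W[\varphi]$ to vanish in the exterior and hence $\varphi=0$.
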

	
	\begin{proof}
		By the definitions of $\widehat\zeta$ given after \eqref{hat zeta} and $J_Y$, we have 
		\begin{align*}
		(\zeta, \p _\nu w)_{Y} =\overline{(J_Y\zeta)(\p _\nu w)}=\overline{\widehat{\zeta}(\p _\nu w)}=\overline{\int_{\p \Omega}\widehat{\zeta}\p_\nu w\, d\sigma(x)}. 
		\end{align*}
		Then combining this with \eqref{hat zeta} we have \eqref{pre-indicator function_equivalence}.

		Next, by the jump formula of the double-layer potential $W[\varphi]$, we have 
		\begin{align*}
		W[\varphi](x)=	-\frac{1}{2}\varphi(x)+\int_{\p \Omega}\p_{\nu _y} \G_0 (x,y)\varphi(y)\, d\sigma(y),\,\,x\in \p \Omega.
		\end{align*}
		Note that $W[\varphi]$ is harmonic in $\Omega$ and the Dirichlet boundary value problem for the Laplace equation in $\Omega$ is unique. Then, by the compactness of 
		\begin{align*}
		Y^\ast \ni \varphi \mapsto \int_{\p \Omega} \p_{\nu _y} \G_0 (x,y)\varphi(y)\, d\sigma(y) \in Y^\ast 
		\end{align*}
		and the Fredholm alternative, there is a unique solution $Q[\widehat{\zeta}]:=\varphi\in Y^\ast $ satisfying the boundary integral equation 
		\[
		W[\varphi](x)=\widehat{\zeta}(x),\,\,x\in \p \Omega,
		\]
		and $Q[\widehat\zeta]$ depends continuously on $\widehat \zeta\in Y^\ast$. By $w=0$ on $\p \Omega$ and $w=-v$ on $\partial D$, we have 
		\begin{align}
		\int_{\p \Omega}W[\varphi](x)\p _\nu w(x)\, d\sigma(x)=\int_{\p D}\left(W[\varphi](x)\p_\nu w(x)+\p _\nu W[\varphi](x)v(x) \right)d\sigma(x)
		\end{align}
		by the Green formula, where $\nu $ is the unit outer normal on $\p D$. This immediately implies \eqref{indicator function NRT} and completes the proof.
	\end{proof}

	Now we are ready to start considering the reconstruction of the unknown obstacle $D$ by the NRT. By Proposition \ref{Prop: Equivalent of pre-indicator}, it suffices to consider the indicator function \eqref{indicator function NRT} for studying the reconstruction by the NRT. The reconstruction by the NRT is a sampling method to test the test domains using the indicator function defined by \eqref{indicator function NRT}. Likewise before for the reconstruction by the RT, a test domain $G$ is called \emph{positive} or \emph{no-response} if $I_{NRT}(G)$ is finite.
	
	\begin{rmk}${}$
		The meaning of no-response is as follows. If we mask the obstacle $D$ by a test domain $G$ i.e. $\overline D\subset\overline G$, then we don't have any huge response i.e. $I(G)<\infty$ (see the proof of Theorem \ref{Thm: NRT} given later).
	\end{rmk}
	
	\medskip
	By replacing $I_{RT}(\cdot)$ in Theorem \ref{RT} by $I_{NRT}(\cdot)$, we have the following reconstruction of an unknown obstacle $D$ by the NRT. 
	
	\begin{thm}\label{Thm: NRT}
		For a test domain $G$, we have the followings.
		\begin{itemize}
			\item[(i)] $\overline D\subset\overline G\Longrightarrow I_{NRT}(G)<\infty$.
			
			\item[(ii)] Let $\overline D\not\subset\overline G$ and the boundary $\partial(D\setminus\overline G)$ of $D\setminus\overline G$ be Lipschitz continuous. If the solution $u\in H^1(\Omega\setminus\overline D)$ to the forward problem admits an analytic extension across $\partial D$ from $\Omega\setminus\overline D$ into the whole $\overline D\setminus G$, then $I_{NRT}(G)<\infty$.
			
			\item[(iii)] As an additional assumption, we assume that $\Omega$ and $G$ are \emph{$C^2$-spherical domains}, i.e. $\Omega$ and $G$ are diffeomorphic to an open ball up to their boundaries. Let $\overline D\not\subset\overline G$. If the above $u$ does not admit an analytic extension into $\Omega\setminus\overline G$, then $I_{NRT}(G)=\infty$.
		\end{itemize}
		As a consequence, if $\Omega$ is a $C^2$-spherical domain and also the above solution $u$ does not admit an analytic extension across $\partial D$, then we have the following reconstruction of $\overline D$ as the intersection of all $\overline G$ for positive $C^2$-spherical test domain $G$.
	\end{thm}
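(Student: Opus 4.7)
The plan is to work directly from the representation \eqref{indicator function NRT} via the double-layer potential, without invoking the RT/NRT duality. The first thing I would do is observe that because $W[\varphi]$ is harmonic in $\Omega$, Green's representation with the Dirichlet Green function gives $R^{(\ast)}W[\varphi]=-W[\varphi]\big|_{\partial G}$, so the constraint $\norm{R^{(\ast)}W[\varphi]}_{X^\ast}\le 1$ reduces simply to $\norm{W[\varphi]}_{H^{1/2}(\partial G)}\le 1$. This identification will be the engine of all three parts.

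For (i), I would adopt the same convention as in Theorem \ref{RT}(i) that $\partial G\subset\Omega\setminus D$. Then $W[\varphi]$ is harmonic in an open neighborhood of $\partial D$; solving the Dirichlet problem in $G$ and invoking interior regularity for harmonic functions yields $\norm{W[\varphi]}_{H^{3/2}(\partial D)}+\norm{\partial_\nu W[\varphi]}_{H^{1/2}(\partial D)}\le C\norm{W[\varphi]}_{H^{1/2}(\partial G)}\le C$, so the integrand in \eqref{indicator function NRT} is uniformly bounded. For (ii), the analytic extension of $u$ makes $w$ harmonic on $\Omega\setminus\overline G$; I would apply Green's identity in the Lipschitz domain $D\setminus\overline G$ to the pair $(W[\varphi],w)$, using $w=-v$ on $\partial D$, to transfer the $\partial D$-integral in \eqref{indicator function NRT} to the analogous integral over $\partial G$. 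The constraint together with the interior estimate $\norm{\partial_\nu W[\varphi]}_{H^{-1/2}(\partial G)}\le C\norm{W[\varphi]}_{H^{1/2}(\partial G)}$, valid because $W[\varphi]$ is harmonic in a neighborhood of $\overline G$, then closes the bound.

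The substantive work is (iii), which I would prove by contradiction. A Green's identity computation in $\Omega\setminus\overline D$ shows
\[
\int_{\partial D}\bigl(W[\varphi]\,\partial_\nu w+\partial_\nu W[\varphi]\,v\bigr)\,d\sigma=\int_{\partial\Omega}W[\varphi]\,\partial_\nu w\,d\sigma,
\]
so $I_{NRT}(G)<\infty$ says the functional $H\mapsto\int_{\partial\Omega}H\,\partial_\nu w\,d\sigma$ is bounded in the $H^{1/2}(\partial G)$ norm on the family $\{W[\varphi]:\varphi\in Y^\ast\}$. Under the $C^2$-spherical hypothesis on $\Omega$, the Fredholm alternative applied to $-\tfrac12 I+K$ on $\partial\Omega$ makes $\varphi\mapsto W[\varphi]|_{\partial\Omega}$ bijective onto $H^{1/2}(\partial\Omega)$, so this family equals the space of all harmonic functions in $\Omega$ with $H^{1/2}$ boundary data, whose restrictions to $\partial G$ are dense in $H^{1/2}(\partial G)$ by a Runge-type approximation argument. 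Hahn--Banach then yields $\psi\in H^{-1/2}(\partial G)$ with $\int_{\partial\Omega}H\,\partial_\nu w\,d\sigma=\langle H|_{\partial G},\psi\rangle$ for every harmonic $H$ in $\Omega$. I would set $\tilde w:=-S[\psi]$ on $\Omega\setminus\overline G$ and note it is harmonic and vanishes on $\partial\Omega$; applying Green's identity to $(H,S[\psi])$ in $\Omega$ with $\Delta S[\psi]=-\psi\,\delta_{\partial G}$ produces $\int_{\partial\Omega}H\,\partial_\nu S[\psi]\,d\sigma=-\langle H|_{\partial G},\psi\rangle$, which combined with surjectivity of the Dirichlet harmonic extension forces $\partial_\nu\tilde w=\partial_\nu w$ on $\partial\Omega$. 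Thus $\tilde w-w$ has zero Cauchy data on $\partial\Omega$; the $C^2$-spherical assumption on both $\Omega$ and $G$ makes $\Omega\setminus(\overline D\cup\overline G)$ connected, and unique continuation forces $\tilde w=w$ there---supplying the analytic extension of $u$ across $\partial D$ into $\Omega\setminus\overline G$ that the hypothesis forbids.

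The hardest step will be (iii): the Hahn--Banach extraction of $\psi$ hinges on the density of $\{W[\varphi]|_{\partial G}\}$ in $H^{1/2}(\partial G)$, and propagating a boundary identity to a global equality $\tilde w=w$ hinges on the connectedness of $\Omega\setminus(\overline D\cup\overline G)$---both precisely where the $C^2$-spherical hypothesis enters. The final reconstruction statement would then follow as in Theorem \ref{RT}: (i) gives $\overline D\subset\overline G\Rightarrow I_{NRT}(G)<\infty$ and (iii) gives the converse within the $C^2$-spherical class, whence $\overline D$ equals the intersection of the closures of all positive $C^2$-spherical test domains.
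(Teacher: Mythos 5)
Your argument is genuinely different from the paper's, and most of it is sound, but part (iii) has a gap that needs attention. First the good news: the observation that Green's representation with the Dirichlet Green function gives $R^{(\ast)}W[\varphi]=-W[\varphi]\big|_{\partial G}$ is correct and is a real simplification of the constraint in \eqref{indicator function NRT} that the paper does not exploit. Your parts (i) and (ii) are plausible modulo small care (in (i) you need $\overline D\subset G$, not just $\overline D\subset\overline G$, to have the interior regularity estimate near $\partial D$; the paper sidesteps this by invoking Theorem~\ref{claim1} and the adjoint pairing $\lvert(R^\ast\zeta,\varphi)\rvert\le\Vert\varphi\Vert$ directly). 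For (iii) your route is fundamentally different from the paper's: you run a Hahn--Banach/Riesz-representation argument to turn $I_{NRT}(G)<\infty$ into the existence of a single-layer density $\psi\in H^{-1/2}(\partial G)$ with $R[\psi]=\partial_\nu w$, then deduce analytic extension by unique continuation. This is, in effect, a direct proof of the implication the duality proposition would give you, and is cleaner and more functional-analytic. The paper deliberately avoids this and instead uses a homotopy $\{G_t\}$ shrinking $\Omega$ to $G$, locates a point $z_0$ where the Taylor coefficients of $u$ blow up (Lemma~5.2 from \cite{Potthast}), builds the test function $\HH(\cdot,z)$ from derivatives of the Green function, approximates it by $W[\varphi^z_n]$ via the Runge density (Proposition~A.3), and lets the Taylor blow-up drive $I_{NRT}^{(n)}(G)\to\infty$. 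That construction is longer but localises the contradiction at $z_0$ and uses only connectedness of $\Omega\setminus\overline{M(z)}$ for auxiliary domains $M(z)$ it constructs, which it can arrange by the homotopy.

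The gap in your (iii) is exactly where you announce the final step: you assert that the $C^2$-spherical hypothesis on $\Omega$ and $G$ "makes $\Omega\setminus(\overline D\cup\overline G)$ connected," but this is not automatic. The hypotheses are only that $\Omega$, $G$ are spherical and that $\Omega\setminus\overline D$ is connected; nothing prevents $\overline D\cup\overline G$ from enclosing a pocket once $\overline D\not\subset\overline G$ (think of $D$ roughly horseshoe-shaped with $G$ plugging the gap — $\Omega\setminus\overline D$ and $\Omega\setminus\overline G$ are each connected, but $\Omega\setminus(\overline D\cup\overline G)$ need not be). If that set is disconnected, unique continuation from zero Cauchy data on $\partial\Omega$ only gives $\tilde w=w$ on the component meeting $\partial\Omega$, and then $\tilde w$ need not match $w$ on the other components, so it is not a valid analytic extension of $w$ in the sense of Definition~\ref{analytic extension} (which requires agreement on all of $\Omega\setminus\overline D$). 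You would need either to argue connectivity separately, or to restrict to the component meeting $\partial\Omega$ and show that some portion of $\partial D$ lies on its boundary and then feed that into the definition of extension — which is essentially what the paper's homotopy construction is arranging. Also, to make the Hahn--Banach step rigorous you should note explicitly that the functional $H\mapsto\int_{\partial\Omega}H\,\partial_\nu w\,d\sigma$ is well defined as a function of $H\big|_{\partial G}$: if $W[\varphi_1]=W[\varphi_2]$ on $\partial G$ then, being harmonic in $\Omega$ and vanishing on $\partial G$, the difference vanishes in $G$ and hence, by unique continuation, in all of $\Omega$, so the two functionals agree.
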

	
	Before getting into the proof, we would like to emphasize that the proof of this theorem is much more involved than that of Theorem \ref{RT}. Especially proving (iii). The key to prove (iii) is the analytic extension of solutions for the Laplace equation. This can be analyzed by using the following well known lemma given in \cite[Lemma 3.2]{Potthast}. The proof of (iii) is almost the same as the proof of Theorem \ref{RT} in that paper, but we need to adopt the proof there to our situation.
	
	\begin{lem}
		Let $\mathcal{O}\Subset\Omega$ be a domain with $C^2$ boundary, and $\mathcal{O}_e:=\Omega \setminus \overline{\mathcal{O}}$. Let $u$ be analytic in $\mathcal{O}_e$, and consider the following set associated with the Taylor coefficients of $u$ at $z\in \mathcal{O}_e$ with $\dist(z,\p \Omega)>\rho$ for some $\rho>0$:
		\begin{align}\label{set of Taylor coefficients}
		\left\{a_{\ell}(z):=\sup_{h\in{\mathbb R}^3,\,|h|=1}\rho^\ell \frac{{|(h\cdot \nabla )^\ell u(z)|}}{\ell !}:\ \ell \in {\mathbb Z}_+:=\N\cup\{0\} \right\}.
		\end{align}
		If this is uniformly bounded in a neighborhood of all such $z$, then $u$ can be extended into an open neighborhood of $\overline{\mathcal{O}_e}$. In other words, there exists a set $\mathcal{O}'$ with $\overline{\mathcal{O}'}\subset \mathcal{O}$ such that $u$ is extensible into $\mathcal{O}'_e:=\Omega\setminus \overline{\mathcal{O}'}$.
	\end{lem}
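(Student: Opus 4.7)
The plan is to convert the uniform boundedness of the scaled Taylor coefficients into a uniform radius of analyticity for $u$ at points $z\in\mathcal{O}_e$ close to $\partial\mathcal{O}$, and then patch together the resulting local analytic extensions by a compactness argument.

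First I would exploit the hypothesis directly: if $M$ is the uniform upper bound on $a_\ell(z)$ for all $\ell\in\mathbb{Z}_+$ and all $z$ in a neighborhood of $\{z\in\mathcal{O}_e:\dist(z,\partial\Omega)>\rho\}$, then for any such $z$ and any $h\in\mathbb{R}^3$ with $|h|<\rho$ the definition of $a_\ell$ gives
\[
\frac{|(h\cdot\nabla)^\ell u(z)|}{\ell!}\;\leq\; M\left(\frac{|h|}{\rho}\right)^{\ell}.
\]
Hence the formal Taylor series $\sum_{\ell\geq 0}\frac{1}{\ell!}(h\cdot\nabla)^\ell u(z)$ converges absolutely and uniformly on the open ball $B_\rho(z)$. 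On the subset $B_\rho(z)\cap\mathcal{O}_e$ the sum equals $u$ by analyticity, so the series defines an analytic continuation of $u$ into the whole of $B_\rho(z)$, including into the part that crosses $\partial\mathcal{O}$ and penetrates into $\mathcal{O}$.

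Since $\mathcal{O}\Subset\Omega$, the number $d:=\dist(\partial\mathcal{O},\partial\Omega)$ is positive, and I would shrink $\rho$ so that $\rho<d/3$. The $C^2$ regularity of $\partial\mathcal{O}$ provides a uniform exterior ball condition: for each $y\in\partial\mathcal{O}$ there exists $z(y)\in\mathcal{O}_e$ with $|z(y)-y|<\rho/2$. Because $z(y)$ sits at distance at least $d-\rho/2>\rho$ from $\partial\Omega$, the first step yields an analytic extension of $u$ onto $B_\rho(z(y))$; moreover this ball reaches a distance at least $\rho/2$ into $\mathcal{O}$ across $\partial\mathcal{O}$ in a full neighborhood of $y$. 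Then I would cover the compact set $\partial\mathcal{O}$ by finitely many such balls $B_\rho(z(y_1)),\dots,B_\rho(z(y_N))$ and set
\[
\widetilde{\mathcal{O}}_e:=\mathcal{O}_e\cup\bigcup_{j=1}^{N}B_\rho(z(y_j)).
\]
On each $B_\rho(z(y_j))$ I have a local analytic extension, and any two of them agree with $u$ on the nonempty open set $B_\rho(z(y_j))\cap B_\rho(z(y_k))\cap\mathcal{O}_e$, hence agree throughout $B_\rho(z(y_j))\cap B_\rho(z(y_k))$ by the identity principle for real-analytic (equivalently harmonic) functions. This glues into a single analytic extension on $\widetilde{\mathcal{O}}_e$, which by construction contains an open neighborhood of $\overline{\mathcal{O}_e}$. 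Taking $\mathcal{O}':=\Omega\setminus\widetilde{\mathcal{O}}_e$ yields the required open set with $\overline{\mathcal{O}'}\subset\mathcal{O}$ on whose exterior $u$ extends analytically.

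The main obstacle is the geometric patching step: one must guarantee that the union of the $B_\rho(z(y_j))$ genuinely covers a one-sided collar neighborhood of $\partial\mathcal{O}$ inside $\mathcal{O}$, without leaving pockets, and that the local extensions are mutually compatible on overlaps that straddle $\partial\mathcal{O}$. The $C^2$ hypothesis on $\partial\mathcal{O}$ (providing the uniform exterior ball condition and collar-neighborhood coordinates) and the assumption that the Taylor-coefficient bound holds in a \emph{neighborhood} of the admissible $z$, rather than only pointwise, are precisely what makes this gluing work; without either, one could lose either the uniform radius $\rho$ or the overlap needed to apply the identity principle.
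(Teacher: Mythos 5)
The paper does not prove this lemma; it is cited from Potthast's 2007 paper (Lemma 3.2 therein), so there is no in-paper argument to compare against. Assessing your proposal on its own merits: the outline is the right one — the uniform Taylor-coefficient bound gives a uniform radius $\rho$ of convergence for the Taylor series of $u$ centred at points of $\mathcal{O}_e$, one then covers a one-sided collar of $\partial\mathcal{O}$ by finitely many such balls, and patches. Step 1 is in fact cleanest if one remembers that here $u$ is harmonic: each $P_\ell(h):=(h\cdot\nabla)^\ell u(z)/\ell!$ is a homogeneous harmonic polynomial of degree $\ell$, so by the maximum principle $\sup_{|h|\le r}|P_\ell(h)|=r^\ell\sup_{|\hat h|=1}|P_\ell(\hat h)|\le M(r/\rho)^\ell$, and $\sum_\ell P_\ell$ converges uniformly on every $\overline{B_r(z)}$, $r<\rho$, to a harmonic, hence real-analytic, function. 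For merely real-analytic $u$ one has to complexify and loses a factor of the radius, which you should flag.

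The genuine gap is in the gluing. You assert that any two of the local extensions "agree with $u$ on the nonempty open set $B_\rho(z(y_j))\cap B_\rho(z(y_k))\cap\mathcal{O}_e$" — but nonemptiness is not automatic. Two of your balls can overlap only inside $\mathcal{O}$ (for instance, from opposite sides of a thin neck of $\mathcal{O}$), in which case the identity principle gives you nothing and the two Taylor extensions may genuinely disagree on $B_j\cap B_k$. Your concluding $\mathcal{O}':=\Omega\setminus\widetilde{\mathcal{O}}_e$ would then not carry a single-valued extension. The standard repair is to extend only into a thin collar $\{x\in\mathcal{O}:\dist(x,\partial\mathcal{O})<\delta\}$ with $\delta$ much smaller than both $\rho$ and the tubular-neighbourhood width of $\partial\mathcal{O}$, and to define the extension at each $x$ via the Taylor series at the canonical point $z(x):=\pi(x)+\epsilon\,\nu(\pi(x))$ with $\pi(x)$ the nearest point of $\partial\mathcal{O}$, $\nu$ the outer normal, and $\epsilon$ a fixed small constant. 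Then for nearby $x,x'$ the expansion points $z(x),z(x')$ both lie in $B_\rho(z(x))\cap B_\rho(z(x'))\cap\mathcal{O}_e$, so the two Taylor series agree on the (convex, hence connected) intersection of the two balls, and the construction is locally consistent and analytic. Your closing paragraph gestures at exactly this, but it is the load-bearing step and needs to be carried out; as written, the choice $z(y)$ merely "within $\rho/2$ of $y$" does not guarantee the needed overlaps in $\mathcal{O}_e$, and taking the full union $\widetilde{\mathcal{O}}_e$ rather than a thin collar overcommits you to consistency at distant overlaps that the argument does not control.
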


	\begin{proof}[Proof of Theorem \ref{Thm: NRT}]
		We first consider the case $\overline D\subset \overline G$. Then $w$ has an analytic extension to $\Omega\setminus G$ which also implies $w_+\big|_{\partial G}\in H^{1/2}(\partial G)$. Hence by Theorem \ref{claim1}, $R[\varphi]=\partial_\nu w$ has a solution $\varphi\in H^{-1/2}(\partial G)$. Then we have
		$$
		\begin{array}{ll}I_{NRT}(G)&=\displaystyle\sup_{\zeta\in H^{-1/2}(\partial\Omega),\,\Vert R^\ast\zeta\Vert_{H^{-1/2}(\partial G)}\leq 1}\left|(\zeta, R[\varphi])_{H^{-1/2}(\partial\Omega)}\right|\\
		&=\displaystyle\sup_{\zeta\in H^{-1/2}(\partial\Omega),\,\Vert R^\ast\zeta\Vert_{H^{-1/2}(\partial G)}\leq 1}\left|(R^\ast\zeta, \varphi)_{H^{-1/2}(\partial G)}\right|<\infty.
		\end{array}
		$$
		This proves (i). 
		
		Next we prove (ii). By the assumption, $w$ is analytic outside $G$ which also implies that $w_+\big|_{\partial(D\setminus \overline G))}\in H^{1/2}(\partial(D\setminus\overline G))$, $R[\varphi]=\partial_\nu w$ has a solution $\varphi\in H^{-1/2}(\partial(D\setminus\overline G))$, where $R:H^{-1/2}(\partial(D\setminus\overline G))\rightarrow H^{-1/2}(\partial\Omega)$. By \eqref{pre-indicator function_equivalence} we have
		$$
		\begin{array}{ll}
		I_{NRT}(G)&=\displaystyle\sup_{\widehat\zeta\in H^{1/2}(\partial\Omega),\,\Vert R^{(\ast)}\widehat\zeta\Vert_{H^{1/2}(\partial G)}\le1}\left|\int_{\partial\Omega} \widehat\zeta(x)\partial_\nu w(x)\,d\sigma(x)\right|\\
		&=\displaystyle\sup_{\widehat\zeta\in H^{1/2}(\partial\Omega),\,\Vert R^{(\ast)}\widehat\zeta\Vert_{H^{1/2}(\partial G)}\le1}\left|\int_{\partial(D\setminus\overline G))} R^{(\ast)}[\widehat\zeta](x)\varphi(x)\,d\sigma(x)\right|.
		\end{array}
		$$
		Here note that by the definition \eqref{R(ast)} of $R^{(\ast)}$ and the mapping property of the associated double-layer potential naturally defined from $R^{(\ast)}$ which will be denoted by the same notation, we have
		$\Vert R^{(\ast)}\widehat\zeta\Vert_{H^{1/2}(\partial G)}\le1$ implies  $\Vert R^{(\ast)}\widehat\zeta\Vert_{H^1(\Omega\setminus\overline G)}\le C$ for some general constant $C>0$. Then by $D\setminus\overline G\subset\Omega\setminus\overline G$ and the continuity of trace to
		$\partial(D\setminus\overline G)$, we have
		$\Vert R^{(\ast)}\widehat\zeta\Vert_{H^{1/2}(\partial(D\setminus\overline G))}\le C$ for some general constant $C>0$. Hence we have $I_{NRT}(G)<\infty$. This proves (ii).
		
		Our next task is to prove (iii). Assume that $u$ does not admit an analytic extension into $\Omega\setminus\overline G$. Let $\{G_t\}_{t\in[0,1]}$ be a homotopy, hence $G_t$ depends continuously on $t\in[0,1]$ and $G_0=\Omega,\,G_1=G$, and moreover it satisfies $G_{t'}\Subset G_t$ for $t'>t$, $t,\,t'\in[0,1]$ and each $G_t$ with $t\in(0,1)$ has the same regularity and topological property as $G$. As for the existence of such a homotopy see Theorem 6.3 of \cite{HPNS}. Since $u$ cannot be analytically extended into the set $\Omega \setminus \overline{G}$, then there must exist a maximal parameter $t_0\in (0,1)$ such that $u$ can be analytically extended into $\Omega\setminus G_t$ for any $t<t_0$ but not into any $\Omega \setminus G_t$ for all $t>t_0$. Hence, for a given $\rho>0$, there exist $z_0\in \Omega \setminus G_{t_0}$ with $\dist(z_0,\p \Omega):=\inf_{y\in\partial\Omega}|y-z_0|>\rho$  such that the set \eqref{set of Taylor coefficients} is not uniformly bounded in any neighborhood $V(z_0)$ of $z_0$. Put
		\begin{align}\label{distance of z0 to G}
		\rho_0 :=\text{dist}(z_0,\overline G).
		\end{align}
		Take $V(z_0)$ as a subset of a ball $B_{\rho_0/2}(z_0)$ with radius $\rho_0/2$ centered at $z_0$. Fix any $h \in \R^3$ with $|h|=1$ and define
		\begin{align}\label{beta}
		\beta(z,\ell):=\Vert (h\cdot\nabla)^\ell\mathcal{G}(\cdot,z)\Vert_{H^1(G\cup (\Omega\setminus\overline{\Omega_\epsilon}))}, 
		\end{align}
		for $\ell \in \Z_+$, $z\in V(z_0)$, where $\Omega_\epsilon:=\{x\in\Omega:\,\text{dist}(x,\,\partial\Omega)>\epsilon\}$ with a small enough $\epsilon>0$. Then the $H^1(G)$-norm of the function 
		\begin{align}\label{the testing function H}
		\HH(\cdot,z):= \frac{1}{2\sigma\kappa\beta (z,\ell)}(h \cdot \nabla )^\ell \G (\cdot , z)
		\end{align}
		is bounded by $(2\sigma\kappa)^{-1}$ for $z$ in $V(z_0)$, where $\sigma=\Vert R^{(\ast)}\Vert$ and $\kappa>0$ is the norm of the trace operator $H^1(G)\rightarrow H^{1/2}(\partial G)$.
		
		Now for any fixed $z\in V(z_0)\setminus G_{t_0}$, there exist domains $M(z)\Subset \widetilde M(z)$ with the following properties: 
		\begin{itemize}
			\item[(1)] the boundaries $\partial M(z)$, $\partial \widetilde M(z)$ of $M(z)$, $\widetilde M(z)$ are $C^2$,
			
			\item[(2)] $u$ is analytic in $\Omega\setminus M(z)$,
			
			\item[(3)] $z\notin \widetilde M(z)$,
			
			\item[(4)] $\Omega\setminus\overline {M(z)}$, $\Omega\setminus\overline{\widetilde M(z)}$ are connected,
			
			\item[(5)] $G\subset M(z)$.
			
		\end{itemize}
		%Then one obtains that the Cauchy data $\left\{u|_{\p M[z]} ,  \left.\frac{\p u}{\p \nu }\right|_{\p M[z]} \right\}$ of the extension of $u$ are well-defined.

		Since $W$ has a dense range (see Appendix), there exists a sequence $\varphi_n^z\in H^{1/2}(\partial\Omega),\,n\in\N$ such that 
		\begin{align}\label{Runge1}
		\norm{W[\varphi_n^z]-\HH(\cdot,z)}_{H^{1/2}(\partial\widetilde M(z))}\rightarrow0,\,\,n\rightarrow\infty.
		\end{align}
		Here note that both $W[\varphi_n^z],\,\HH(\cdot,z)$ satisfy the Laplace equation in $\widetilde M(z)$. Then by the interior regularity estimate for solutions of the Laplace equation, we have
		\begin{equation}\label{approximation of HH}
		\Vert W[\varphi_n^z]-\HH(\cdot,z)\Vert_{H^1(M(z))}\rightarrow0,\,\,n\rightarrow\infty
		\end{equation}
		and hence
		\begin{align}\label{Runge2}
		\norm{W[\varphi_n^z]-\HH(\cdot,z)}_{H^1({M(z)})}\leq (2\sigma\kappa)^{-1}
		\end{align}
		for large enough $n$.
		From the above property (5),
		\begin{align}
		\Vert W[\varphi_n^z]\Vert_{H^1(G)} \leq\Vert W[\varphi_n^z]-\HH(\cdot,z)\Vert_{H^1(G)}+ \Vert\HH(\cdot,z)\Vert_{H^1(G)}
		\leq(\sigma\kappa)^{-1}
		\end{align}
		for any fixed $z\in V(z_0)$ and a large enough $n$. This implies 
		$$\label{test bound condition}
		\Vert R^{(\ast)}W[\varphi_n^z]\Vert_{H^{1/2}(\partial G)}\le 1
		$$
		for any fixed $z\in V(z_0)$ and a large enough $n$. 
		
		Now recall \eqref{indicator function NRT}. Based on this for any $n\in \N$, consider 
		\begin{align}\label{pre-indicator function in NRT}
		I_{	NRT}^{(n)}(G):=\left|\int_{\p D}\left(W[\varphi_n^z](x)\p_\nu w(x)+\p_\nu W[\varphi_n^z](x)v(x)\right) d\sigma(x)\right|.
		\end{align}
		Then by the Green formula, we have 
		\begin{align}\label{no extension will blowup}
		\begin{split}
		I_{	NRT}^{(n)}(G)=& \left|\int_{\p D}\left(W[\varphi_n^z](x)\p_\nu w(x)-\p_\nu W[\varphi_n^z](x)w(x)\right) d\sigma(x)\right|\\
		=&  \left|\int_{\gamma}\left(W[\varphi_n^z](x)\p_\nu w(x)-\p_\nu W[\varphi_n^z](x)w(x)\right) d\sigma(x)\right|,
		\end{split}
		\end{align}
		where $\gamma=\p (M(z)\cap D)$ and the normal on $\gamma$ is pointing into the exterior of the finite region closed by the curve $\gamma$. Since $\gamma \subset \overline{M(z)}$, by taking the limit $n\to \infty$ to \eqref{no extension will blowup} and using \eqref{the testing function H}, we have \begin{align}\label{almost final estimate}
		\begin{split}
		&\lim_{n \to \infty}I^{(n)}_{NRT}(G)\\
		=& \left|\int_{\gamma}\left(\frac{\p w}{\p \nu}(x)\HH(x,z)-\frac{\p \HH(x,z)}{\p \nu(x)}w(x) \right)d\sigma(x)\right| \\
		=&\frac{1}{\beta(z,\ell)}\left|\int_{\gamma}\left(\frac{\p w}{\p \nu}(x)(h\cdot \nabla )^\ell \G (x , z) - \frac{\p }{\p \nu(x)}\left((h \cdot \nabla )^\ell \G (x , z)\right)w(x) \right)d\sigma(x)\right| \\
		=& \left|\frac{1}{\beta(z,\ell)}(h\cdot \nabla )^\ell w(z)- \frac{1}{\beta(z,\ell)}(h\cdot \nabla )^\ell w_{ext}(z)\right|,
		\end{split}
		\end{align}
		where 
		\begin{align}\label{uext}
		w_{ext}(z):=\int_{\p \Omega}\left(\frac{\p w}{\p \nu}(y)\G(y,z)-\frac{\p \G(y,z)}{\p \nu(y)}w(y) \right)d\sigma(y).
		\end{align}
		For any fixed $\rho>0$ specified later, the set \eqref{set of Taylor coefficients} is not uniformly bounded in any neighborhood $V(z_0)$ of $z_0$. Hence, for any $k\in\N$, there exist $\ell_k \in \N$ and $z_k\in V(z_0)\setminus G_{t_0}$ such that 
		\begin{align}\label{estimate of w}
		\left|\frac{\rho^{\ell_k}}{\ell_k !}(h \cdot \nabla)^{\ell_k} w(z_k) \right|\geq k.
		\end{align}
		On the other hand, by the analyticity of $\mathcal{G}(y,z)$ with respect to $z\in V(z_0)\setminus G_{t_0}$ for $y\in G\cup(\Omega\setminus\overline{\Omega_\epsilon})$, there exist constants $c>0$ and $\rho>0$ such that
		\begin{align}\label{another estimate of beta}
		|\beta(z,\ell)|\leq c\frac{\ell !}{\rho^\ell}, \quad z\in V(z_0)\setminus G_{t_0}, \quad \ell \in \N.
		\end{align}
		Further from the definition of $\beta(z,\ell)$, we have
		\begin{equation}\label{estimate of wk}
		\left|\frac1{\beta(z,\ell)}
		(h\cdot\nabla)^\ell w_{\text{ext}}(z)\right|\le c',\,\,z\in V(z_0)\setminus G_{t_0},\,\,\ell\in\N
		\end{equation}
		for some constant $c'>0$.
		Hence combining \eqref{almost final estimate}--\eqref{estimate of wk}, we have
		
		\begin{align*}
		\begin{split}
		I_{NRT}^{(k)}(G)=&\left|\int_{\p D}\left(W[\varphi_k](x)\p_\nu w(x)-\p_\nu W[\varphi_k](x)w(x)\right) d\sigma(x)\right|\\
		\geq & \frac{1}{c}\left|\frac{\rho^{\ell_k}}{\ell_k !} (h \cdot \nabla )^{\ell_k}w(z_k) \right| -\wt c\,\,\,\,\text{with some constant $\tilde c>0$} \\
		\to & \infty \quad \text{ as } k\to \infty.
		\end{split}
		\end{align*}
		Thus we have
		\begin{align*}
		I_{NRT}(G)\geq \lim_{k\to \infty}I_{NRT}^{(k)}(G)=\infty.
		\end{align*}
		This completes the proof of (iii). The rest of the proof is the same as that for Theorem \ref{RT}.
	\end{proof}
	
	\begin{rem}\label{remark NRT}
		We could have defined the indicator function $I_{NRT}(G)$ for a test domain $G$ as
		\begin{equation}
		I_{NRT}(G):=\displaystyle\lim_{\epsilon\to0}\displaystyle\sup_{\zeta\in H^{-1/2}(\partial \Omega),\,\Vert R^\ast \zeta\Vert\le \epsilon}|(\zeta,\partial_\nu w)|.
		\end{equation}
		Then we can say the same as before in Remark \ref{remark RT}.
	\end{rem}
	
	\section{Convex polygonal $D$}\label{sec convex case}
	In this section we will provide a proof of Theorem \ref{convex}. To begin with we give the definition that a corner of a polygon has an irrational angle as follows.
	
	\begin{defi}
		Let $0<\theta<2\pi$ be the angle of a corner of a polygon $D$. This angle is called irrational if $\theta=2\pi\alpha$ with $\alpha\not\in{\mathbb Q}$.
	\end{defi}
	
	\medskip
	Concerning the analytic extension of $u$ across $\partial D$ we have the following proposition. 
	
	\begin{prop}\label{no analytic extension}
		Let $D$ be a convex polygon. Assume that $D$ satisfy one of the following conditions.
		\begin{itemize}
			\item[{\rm (i)}] All the corners of $D$ have irrational angles.
			\item[{\rm (ii)}] $D$ satisfies the distance property \eqref{distance property}.
		\end{itemize}
		%and $u\in H^1(\Omega\setminus\overline D)$ be the solution to \eqref{Main equation}
		%with non-identically zero $f\in H^{1/2}(\partial\Omega)$. 
		Then $u$ cannot analytically extend across this corner.
	\end{prop}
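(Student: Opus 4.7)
The plan is to argue by contradiction: suppose $u$ admits an analytic extension to a disk $B_\delta(P)$ about some corner $P$ of $D$ whose interior angle is $\theta\in(0,\pi)$, and derive $u\equiv 0$ in $\Omega\setminus\overline D$, which contradicts the standing assumption that $f\not\equiv 0$. Place $P$ at the origin with the two edges $E_1,E_2$ at $P$ along the rays $\varphi=0$ and $\varphi=\theta$, and let $\ell_1,\ell_2$ denote the two lines containing them. Since $u=0$ on $\partial D$ and the extension is analytic on $B_\delta(P)$, this extension must vanish on $\ell_1\cap B_\delta(P)$ and $\ell_2\cap B_\delta(P)$. Writing the polar Fourier expansion of the harmonic extension,
\[
u(r,\varphi)=\sum_{n\ge 1}r^n\bigl(a_n\cos n\varphi+b_n\sin n\varphi\bigr),
\]
the condition $u=0$ on $\ell_1\cap B_\delta(P)$ gives $a_n=0$ for every $n\ge 1$, and the condition $u=0$ on $\ell_2\cap B_\delta(P)$ then forces $b_n\sin(n\theta)=0$ for every $n\ge 1$.

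For case (i), writing $\theta=2\pi\alpha$ with $\alpha\notin\mathbb{Q}$, the quantity $n\theta/\pi=2n\alpha$ is never an integer for $n\ge 1$, so $\sin(n\theta)\neq 0$ and therefore $b_n=0$ as well. Hence $u\equiv 0$ on $B_\delta(P)$, and the unique continuation property (UCP) applied to $u$ on the connected set $\Omega\setminus\overline D$ yields $u\equiv 0$; in particular $f=u|_{\partial\Omega}\equiv 0$, a contradiction.

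For case (ii) the local expansion no longer suffices: when $\theta/\pi=p/q$ is rational (in lowest terms), the coefficients $b_n$ with $n\in q\Z$ are unconstrained. The idea is to upgrade the local analytic extension at $P$ into a harmonic extension of $u$ onto all of $\Omega$ by a chain of Schwarz reflections across the edges of $\partial D$, where the distance property is used at every stage to guarantee that the reflected images remain inside $\Omega$. Convexity gives $\theta<\pi$ at every vertex, so the exterior angular opening at each vertex exceeds $\pi$; consequently, Schwarz reflection of $u|_{\Omega\setminus\overline D}$ across $\ell_1$ (valid because $u=0$ on $E_1$ together with $\ell_1\cap B_\delta(P)$ coming from the analytic extension) combines with the analytic extension and with the exterior wedge at the next vertex $P'$ of $E_1$ to produce a harmonic function on a full neighborhood of $P'$. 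The extension is then analytic at $P'$, so the same mechanism applies there and propagates the analytic extension around $\partial D$. The hypothesis $\diam D<\dist(D,\partial\Omega)$ is what makes every such reflection legal, since for any $x\in\overline D\cap\ell_i$ the ball $B_{\diam D}(x)$ contains $\overline D$ and lies inside $\Omega$, hence the reflection across $\ell_i$ sends $\overline D$ to a subset of $\Omega$. After finitely many iterations (one per vertex of $D$) one obtains a harmonic extension of $u$ defined on an open set that contains $\overline D$ and still satisfies $u=0$ on $\partial D$. The maximum principle applied on $D$ forces this extension to vanish in $D$, and then UCP on $\Omega$ forces $u\equiv 0$ in $\Omega$, contradicting $f\not\equiv 0$.

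The main obstacle is the iterated reflection in case (ii). One must check that each successive Schwarz reflection matches the previously constructed harmonic extension on the overlap (which follows from the consistent vanishing of $u$ on $\partial D$ together with uniqueness of harmonic extension), and verify that after finitely many reflections the combined domain covers all of $\overline D$, so that the maximum principle and UCP can be invoked. The geometric hypothesis $\diam D<\dist(D,\partial\Omega)$ is essential throughout, since it is precisely the condition that keeps the reflected copies of $\overline D$ inside $\Omega$ and prevents the iterative construction from running into $\partial\Omega$ before the analytic extension has been propagated around the entire boundary of $D$.
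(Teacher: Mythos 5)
Your proposal follows essentially the same route as the paper, which defers the technical details to Friedman--Isakov's Lemmas 3.1 and 3.2: expand the hypothetical analytic extension in polar coordinates at the corner, deduce $b_n\sin(n\theta)=0$, conclude $u\equiv 0$ in case (i), and in case (ii) continue $u$ harmonically into all of $D$ under the distance condition, then use the maximum principle and unique continuation to contradict $f\not\equiv 0$.

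There is, however, one real gap in your sketch of case (ii). The claim that the odd Schwarz reflection matches $u$ on the overlap ``follows from the consistent vanishing of $u$ on $\partial D$ together with uniqueness of harmonic extension'' is not sound: two harmonic functions agreeing on the one-dimensional set $E_1\subset\ell_1$ need not coincide on any open set, so vanishing plus uniqueness alone does not give the required oddness. The correct reason is structural. Since the local expansion at $P$ contains only the pure sine terms $r^{mq}\sin(mq\varphi)$, the corner extension is automatically odd across $\ell_1$ (and likewise across $\ell_2$); because $u$ agrees with this extension on the exterior wedge near $P$, the relation $u(x)=-u(\sigma_{\ell_1}(x))$ holds there and then propagates along $\Omega\setminus\overline D$ by the identity theorem for real-analytic functions. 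It is at this stage that the distance property really earns its keep in the Friedman--Isakov scheme: it guarantees $\sigma_{\ell_1}(\overline D)\Subset\Omega$ and that the open set on which this continuation argument runs is connected, so the oddness persists up to the next edge and vertex, and the reflection and corner-expansion steps can be iterated around $\partial D$. With that justification supplied, your outline matches the paper's intended argument.
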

	\begin{proof} This can be easily proved along the same way following the proofs for Lemma 3.1 and Lemma 3.2 in \cite{Friedman}. 
		More precisely consider the function $u^e(x)$ in the proof of Lemma 3.1 of \cite{Friedman} which satisfies the Dirichlet boundary condition on $P_1\cap P_2\cap B_\epsilon(a_0)$ and assume that it can be analytically continued across $a_0$ into $D$. This $u^e$ corresponds to our $u$. Then $u^e(x)$
		becomes
		$$u^e(x)=\displaystyle\sum_{k=1}^\infty b_k^\epsilon r^k\sin(k\varphi)
		$$
		with $b_k^\epsilon\sin(k\alpha)=0$, $k\in{\mathbb N}$. Here we have 
		\begin{equation}\label{indices}
		\begin{cases}
		\text{\rm case 1:}\,\,\sin(k\alpha)\not=0,\,k\in{\mathbb N}\,\,\,\text{if $\alpha/\pi\not\in{\mathbb Q}$}\Longrightarrow b^\epsilon_k=0,\,\,k\in{\mathbb N},\\
		\\
		\text{\rm case 2:}\,\,\sin(k\alpha)=0,\,\,k\in{\mathbb N},\,p|k\,\,\,\text{if $\alpha/\pi=q/p\in{\mathbb Q}$}\\
		\qquad\qquad\qquad\qquad\qquad\qquad\qquad \quad \text{\rm with $p,q\in{\mathbb N},\, (p,q)=1$},
		\end{cases}
		\end{equation}
		where $p|k$ and $(p,q)=1$ mean that $p$ is a divisor of $k$ and
		$p,q$ do not have any common divisor, respectively. The case (i) in the theorem corresponds to the case 1 and the case (ii) in the theorem we have both the cases 1 and 2. Hence in the case (i), we have $f=u\big|_{\partial\Omega}\equiv0$ by the UCP which contradicts to $f\not\equiv0$. For the case 2, note that we have $k=mp,\,m\in{\mathbb N}$. Then arguing as in \cite{Friedman} using the distance condition, $u$ can be analytically continued inside the whole $D$. Hence $u$ satisfies the homogeneous Dirichlet boundary value problem in $D$ for the Laplace equation, which yields $u=0$ in $D$. Then the UCP implies $f\equiv0$ which contradicts to $f\not\equiv0$.
	\end{proof}
	
	We will first show Theorem \ref{convex} by using the RT. \begin{thm}\label{RT convex}
		Let $D$ be the same as in Proposition \ref{no analytic extension}. Let $G$ be a convex polygonal test domain. Then we have the followings.
		\begin{itemize}
			\item[(i)] $\overline D\subset\overline G\Longrightarrow I_{RT}(G)<\infty$.
			\item[(ii)] $\overline D\not\subset\overline G\Longrightarrow I_{RT}(G)=\infty$. 
		\end{itemize}
		As a consequence we have the following reconstruction of $\overline D$ as $\overline D=\cap_{G\in\mathcal{P}}\overline G$.
	\end{thm}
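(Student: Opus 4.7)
The plan is to reduce Theorem \ref{RT convex} to Theorem \ref{RT}, the only new input being the fact that $u$ fails to admit an analytic extension across any corner of $D$ (Proposition \ref{no analytic extension}), together with an elementary convexity observation. Part (i) is immediate from Theorem \ref{RT}(i): when $\overline D \subset \overline G$, the harmonic function $w = u - v$ is analytic on $\Omega \setminus \overline G$ and has $H^{1/2}$-trace on $\partial G$, so $I_{RT}(G) < \infty$. No structural assumption on $G$ beyond $\overline D \subset \overline G$ is needed here.

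For part (ii) I would first prove a geometric lemma: if $D$ and $G$ are convex polygons with $\overline D \not\subset \overline G$, then some vertex $a_0$ of $D$ lies outside $\overline G$. This is a one-line consequence of the convexity of $\overline G$: were every vertex of $D$ contained in $\overline G$, the convex hull of those vertices, which is exactly $\overline D$, would also lie in $\overline G$. Fixing such an $a_0 \in \Omega \setminus \overline G$, I would argue by contradiction. If $I_{RT}(G) < \infty$, then by \eqref{equivalence of analytic extension by RT indicator function} $w$ extends analytically to $\Omega \setminus \overline G$, and since $v$ is harmonic on all of $\Omega$, the same is true of $u = w + v$. Thus $u$ is analytic in an open neighborhood of the corner $a_0$, which yields a local analytic extension of $u$ across $\partial D$ at $a_0$ in the sense of Definition \ref{analytic extension} (taking $D' = D \setminus \overline{B_r(a_0)}$ for a sufficiently small radius $r$ with $\overline{B_r(a_0)} \cap \overline G = \emptyset$ and generic enough that $\partial D'$ is Lipschitz and $\mathcal{H}^{n-1}(\partial D \cap \partial D') > 0$). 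This contradicts Proposition \ref{no analytic extension}, so $I_{RT}(G) = \infty$.

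The reconstruction formula $\overline D = \cap_{G \in \mathcal{P}} \overline G$ then follows formally from (i) and (ii) exactly as in Theorem \ref{RT}: these two parts together give $\overline D \subset \overline G \Longleftrightarrow G \in \mathcal{P}$ within the class of convex polygonal test domains, while for the non-trivial inclusion in the intersection formula, any $x \notin \overline D$ can be excluded by a slightly enlarged convex polygonal neighborhood of $\overline D$ that still avoids $x$ and belongs to $\mathcal{P}$ by (i). The main, and really only, obstacle in the argument is the convexity lemma that locates a corner of $D$ outside $\overline G$; its very simplicity is precisely what permits the whole scheme to run on the weaker local non-extension statement of Proposition \ref{no analytic extension} (which is only a statement at corners) in place of the stronger global non-extension across $\partial D$ required as a hypothesis in Theorem \ref{RT}.
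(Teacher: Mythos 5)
Your proposal is correct and follows the paper's own argument: part~(i) is inherited from Theorem~\ref{RT}(i), part~(ii) uses the convexity observation that $\overline D\not\subset\overline G$ forces a vertex of $D$ outside $\overline G$ and then invokes Proposition~\ref{no analytic extension}, and the intersection formula follows as in Theorem~\ref{RT}. The paper's proof is just more terse (it asserts the vertex claim and the contradiction without spelling them out), so you have simply filled in the details rather than taken a different route.
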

	\begin{proof}
		(i) can be proved in the same way as that of Theorem \ref{RT}. If $\overline D\not\subset \overline G$, there is at least one vertex of $D$ lying outside of $\overline G$. Then by using Proposition \ref{no analytic extension}, we easily have $I_{RT}(G)=\infty$.
	\end{proof}
	
	Finally we show Theorem \ref{convex} by using NRT.
	\begin{thm}
		Assume the existence of a priori convex polygon $D_0$. Let $D$ be the same as in Proposition \ref{no analytic extension} and let $G\subset D_0$ be a convex polygonal test domain. Then we have the followings.
		\begin{itemize}
			\item[(i)] $\overline D\subset\overline G\Longrightarrow I_{NRT}(G)<\infty$.
			\item[(ii)] $\overline D\not\subset\overline G\Longrightarrow I_{NRT}(G)=\infty$. 
		\end{itemize}
		As a consequence we have the following reconstruction of $\overline D$ as $\overline D=\cap_{G\in\mathcal{P}}\overline G$, where $\mathcal{P}$ denotes the set of all positive convex polygonal test domain $G\subset D_0$.
	\end{thm}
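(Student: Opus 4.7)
The plan is to mirror Theorems \ref{Thm: NRT} and \ref{RT convex}: part (i) follows the argument of Theorem \ref{Thm: NRT}(i) verbatim, while part (ii) is a polygonal adaptation of Theorem \ref{Thm: NRT}(iii) made possible by the a priori polygon $D_0$. For (i), the containment $\overline D\subset\overline G$ forces $w$ to be analytic in $\Omega\setminus\overline G$ with $w_+|_{\partial G}\in H^{1/2}(\partial G)$; Theorem \ref{claim1} then produces $\varphi\in H^{-1/2}(\partial G)$ solving $R[\varphi]=\partial_\nu w$, and the adjoint identity combined with Cauchy--Schwarz yields $I_{NRT}(G)\leq \|\varphi\|_{H^{-1/2}(\partial G)}<\infty$.

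For (ii), I would first reduce to a non-extension statement: $\overline D\not\subset\overline G$ forces some vertex $z^\ast$ of $D$ to lie outside $\overline G$, and Proposition \ref{no analytic extension} prevents $u$ from being continued across $z^\ast$, so $u$ admits no analytic extension into $\Omega\setminus\overline G$. I would then rerun the proof of Theorem \ref{Thm: NRT}(iii). The only ingredient that fails in the polygonal setting is the homotopy $\{G_t\}_{t\in[0,1]}$ anchored at $G_0=\Omega$, which requires $C^2$-spherical $G$. I would replace it by a homotopy of convex polygons with $G_0=D_0$, $G_1=G$, and $G_{t'}\Subset G_t$ for $t'>t$. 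Because $D\subset D_0\Subset\Omega$, $u$ is analytic on a neighbourhood of $\Omega\setminus D_0$ and hence extends analytically into $\Omega\setminus G_t$ for $t$ near $0$, while the non-extension across $z^\ast\notin\overline G$ forces a maximal critical parameter $t_0\in(0,1)$.

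From $t_0$ onward I would follow Theorem \ref{Thm: NRT}(iii) step by step: locate $z_0\in\Omega\setminus G_{t_0}$ at which the Taylor coefficients of $u$ fail to be uniformly bounded on any neighbourhood; build $\HH(\cdot,z)=(2\sigma\kappa\beta(z,\ell))^{-1}(h\cdot\nabla)^\ell \G(\cdot,z)$; choose smooth auxiliary enclosing domains $M(z)\Subset\widetilde M(z)\Subset D_0$ with $G\subset M(z)$, which is possible precisely because $G\Subset D_0$ leaves room to smooth $\partial G$; approximate $\HH(\cdot,z)$ in $H^{1/2}(\partial\widetilde M(z))$ by $W[\varphi_n^z]$ via the density of the range of $W$; and apply Green's formula on $\gamma=\partial(M(z)\cap D)$ to extract the Taylor blow-up and conclude $I_{NRT}(G)=\infty$. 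The reconstruction formula then follows: combining (i) and (ii) yields $G\in\mathcal{P}\iff\overline D\subset\overline G$ among convex polygonal $G\subset D_0$, so $\overline D\subset\bigcap_{G\in\mathcal{P}}\overline G$, while any $x\notin\overline D$ can be separated from $\overline D$ by a convex polygon inside $D_0$ (using $D\Subset D_0$), and points outside $\overline{D_0}$ are automatically excluded from the intersection.

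The hard part will be the polygonal execution of part (ii): verifying that the homotopy through convex polygons and the construction of the $C^2$ enclosing sets $M(z)$, $\widetilde M(z)$ go through without the $C^2$-spherical hypothesis on $G$. This is exactly where the a priori polygon $D_0$ is indispensable---it provides a polygonal starting point for the homotopy in place of $\Omega$, and the strict inclusion $G\Subset D_0$ affords the buffer needed to regularize $\partial G$ into the smooth auxiliary domains used in the Runge-type approximation.
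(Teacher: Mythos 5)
Your proposal matches the paper's proof in substance and structure: part (i) is lifted verbatim from Theorem~\ref{Thm: NRT}(i), and part (ii) is obtained exactly as the paper does it---by swapping the homotopy $G_0=\Omega$, $G_1=G$ for a homotopy of convex polygons with $G_0=D_0$, $G_1=G$ and then rerunning the blow-up argument of Theorem~\ref{Thm: NRT}(iii), with Proposition~\ref{no analytic extension} supplying the non-extension at a vertex outside $\overline G$. The paper states this in one sentence; your elaboration of the smoothing of the auxiliary domains $M(z)\Subset\widetilde M(z)$ inside $D_0$ and of the reconstruction formula is consistent with and fills in the same outline.
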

	\begin{proof}
		The proof of (i) is the same as that of (i) in Theorem \ref{Thm: NRT}. Also by considering the homotopy $\{G_t\}_{t\in[0,1]}$ such that $G_0=D_0$, $G_1=G$ and each $G_t$ is a convex polygon, we can just repeat the proof of (iii) in Theorem \ref{Thm: NRT} to prove (ii).
	\end{proof}

	\appendix

	\section*{Appendix}\label{Appendix}
	\setcounter{equation}{0}
	\renewcommand{\theequation}{A.\arabic{equation}}
	\renewcommand{\theprop}{A.\arabic{prop}}
	\setcounter{prop}{0}
	In this appendix we will give the proofs of the bijectivity of $S$, the denseness of the ranges of operators $R^\ast$ and $W$. We give unified proofs which are valid for both the two dimensional case and the three dimensional case. We first prove the bijectivity of $S$.
	\begin{prop}\label{claim2} 
		The operator 
		$$
		S: H^{-1/2}(\partial G)\rightarrow H^{1/2}(\partial G)\,\,\text{is bijective}.
		$$
	\end{prop}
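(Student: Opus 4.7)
The plan is to establish three facts about $S$ viewed as the trace-on-$\partial G$ of the single-layer potential: boundedness from $H^{-1/2}(\partial G)$ into $H^{1/2}(\partial G)$, injectivity, and surjectivity. Boundedness follows from the standard mapping property of the single-layer potential with fundamental solution $\Gamma$, after decomposing $\mathcal{G}(x,y)=\Gamma(x,y)-R(x,y)$, where $R(\cdot,y)$ is the harmonic corrector with $R(\cdot,y)=\Gamma(\cdot,y)$ on $\partial\Omega$; because $\partial G\Subset\Omega$, the kernel $R$ is smooth on $\partial G\times\partial G$ and contributes a compact, hence bounded, operator. I write this briefly and move on.

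For injectivity, suppose $S[\varphi]=0$ on $\partial G$ for some $\varphi\in H^{-1/2}(\partial G)$, and set $u:=S[\varphi]\in H^1(\Omega\setminus\partial G)$. Then $u$ is harmonic in $\Omega\setminus\partial G$, $u=0$ on $\partial\Omega$ (because $\mathcal{G}(\cdot,y)=0$ on $\partial\Omega$), and $u=0$ on $\partial G$. Uniqueness for the interior Dirichlet problem in $G$ and for the Dirichlet problem in $\Omega\setminus\overline G$ (with vanishing data on $\partial\Omega\cup\partial G$) then forces $u\equiv 0$ in $\Omega$. The jump relation for the normal derivative of the single-layer potential gives $\varphi=\partial_\nu u\big|^-_{\partial G}-\partial_\nu u\big|^+_{\partial G}=0$, so $\varphi=0$.

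For surjectivity, given $g\in H^{1/2}(\partial G)$ I solve the two Dirichlet problems: find $u^-\in H^1(G)$ harmonic in $G$ with $u^-=g$ on $\partial G$, and $u^+\in H^1(\Omega\setminus\overline G)$ harmonic in $\Omega\setminus\overline G$ with $u^+=g$ on $\partial G$ and $u^+=0$ on $\partial\Omega$; both are uniquely solvable since the domains are bounded Lipschitz. Gluing $u$ equal to $u^\pm$ in the respective pieces produces a function harmonic in $\Omega\setminus\partial G$, continuous across $\partial G$, vanishing on $\partial\Omega$. Set $\varphi:=\partial_\nu u^--\partial_\nu u^+\in H^{-1/2}(\partial G)$. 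Then $S[\varphi]$ shares all these properties and, moreover, the same jump of normal derivative $\varphi$ across $\partial G$; the difference $u-S[\varphi]$ is harmonic in $\Omega\setminus\partial G$, has no jump in $u$ or in $\partial_\nu u$ across $\partial G$, hence extends harmonically to all of $\Omega$, and vanishes on $\partial\Omega$, so $u-S[\varphi]\equiv 0$. In particular $S[\varphi]=g$ on $\partial G$.

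The main potential obstacle is that in two dimensions the classical single-layer with the bare fundamental solution $\Gamma(x,y)=-\frac{1}{2\pi}\log|x-y|$ need not be injective (the logarithmic-capacity obstruction), which is where a naive proof would fail to be uniform across $n=2,3$. This difficulty disappears here because $\mathcal{G}$ is the Dirichlet Green function of $\Omega$: the exterior-in-$\Omega$ Dirichlet problem with zero data on $\partial\Omega$ admits no nontrivial harmonic solution, so the argument in the injectivity step is identical in both dimensions. This is precisely what makes the unified proof promised in the appendix go through.
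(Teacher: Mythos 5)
Your proof is correct, and the injectivity step is essentially identical to the paper's (harmonic extension with vanishing data on $\partial\Omega$ and $\partial G$ forces $S[\varphi]\equiv0$, then the jump relation gives $\varphi=0$). Where you diverge is in how you get from injectivity to bijectivity. The paper writes $S=S_0+S_1$ with $S_0$ the free-space single-layer trace, invokes the known fact that $S_0:H^{-1/2}(\partial G)\to H^{1/2}(\partial G)$ is Fredholm of index zero (citing McLean), notes $S_1$ is compact, and concludes $S$ is Fredholm of index zero, so injectivity alone yields bijectivity via the Fredholm alternative. You instead prove surjectivity \emph{directly}: given $g\in H^{1/2}(\partial G)$, solve the interior and exterior Dirichlet problems with data $g$ on $\partial G$ and $0$ on $\partial\Omega$, glue, take $\varphi$ to be the jump of the normal derivatives, and show $S[\varphi]$ and the glued function agree because their difference has no jump in trace or normal flux across $\partial G$ and vanishes on $\partial\Omega$. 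Both routes are sound. The paper's is shorter once the Fredholm facts are granted; yours is more self-contained and elementary, trading the index-zero input for a transparent constructive argument (and it displays exactly why the classical two-dimensional logarithmic-capacity obstruction for the free-space single layer is irrelevant here --- the vanishing Dirichlet condition on $\partial\Omega$ replaces the decay condition at infinity --- which the paper handles only implicitly through the Fredholm reduction). One small care point, which you implicitly handle correctly: the sign convention in $\varphi:=\partial_\nu u^--\partial_\nu u^+$ must match the jump relation $\partial_\nu S[\varphi]\big|_- - \partial_\nu S[\varphi]\big|_+=\varphi$ for the cancellation to occur; since you only ever use the \emph{consistency} of the two jumps, the argument is robust to the choice.
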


	\begin{proof}
		Decompose $S$ into
		$$
		S=S_0+S_1,
		$$
		where  $S_0$ is the operator defined likewise the operator $S$ by the potential
		\begin{equation}
		\begin{cases}
		\frac1{4\pi|x-y|}\,\,&\text{if $n=3$},\\
		-\frac{1}{2\pi}\log|x-y|\,\,&\text{if $n=2$},
		\end{cases}
		\end{equation}
		for $x\not=y$. Then it is well known that
		$S_0: H^{-1/2}(\partial G)\rightarrow H^{1/2}(\partial G)$ is a Fredholm operator with index 0 (see \cite{mclean}) and $S_1: H^{-1/2}(\partial G)\rightarrow H^{1/2}(\partial G)$ is compact.
		We remark here that $S_0$ is even bijective for $n=3$. Hence $S$ is a Fredholm operator with index $0$. Then by the Fredholm alternative we only need to show $S$ is injective.
		That is
		\begin{equation*}
		v_+\big|_{\partial G}=S[\varphi]_+\big|_{\partial G}=0 \quad \text{ with }\quad \varphi\in H^{-1/2}(\partial G)\Longrightarrow\varphi=0,
		\end{equation*}
		where $v_+=S[\varphi]\big|_{\Omega\setminus\overline G}$.
		Observe that we have
		$$
		\begin{cases}
		\Delta v_+=0&\text{ in }\,\,\Omega
		\setminus\overline G,\\
		v_+=0 &\text{ on }\partial\Omega,\\
		v_+=0&\text{ on }\partial G.
		\end{cases}
		$$
		By the uniqueness of this boundary value problem, we have $v_+=0$ in $\overline{\Omega\setminus\overline G}$.
		On the other hand $v_-:=S[\varphi]|_{G}$
		satisfies
		$$
		\begin{cases}
		\Delta v_-=0&\text{ in }G,\\
		v_-=v_+=0&\text{ on }\partial G,
		\end{cases}
		$$
		which implies $v_-=0$ in $\overline G$.
		Then by the jump formula for $\partial_\nu S[\varphi]$ at $\partial G$, we have $\varphi=0$. Here note that the singularity of the kernels of $S$ and $S_0$ are the same at $\partial G$.
	\end{proof}
	
	Next we will prove the denseness of the ranges of the operators $R^{\ast}$ and $W$. Since the proofs are almost the same, we only give the details for $R^\ast$ and just point out some additional things to be concerned for $W$.
	\begin{prop}\label{denseness of Rge(R ast)}
		$R$ is injective. Hence the range of $Rge(R^\ast)\subset H^{-1/2}(\partial G)$ of $R$ is dense. 
	\end{prop}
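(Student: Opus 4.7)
My plan is to reduce the denseness claim to injectivity of $R$, since for bounded operators between Hilbert spaces one has $\overline{\mathrm{Rge}(R^\ast)}=\ker(R)^\perp$, so $R$ injective implies $\mathrm{Rge}(R^\ast)$ dense in $H^{-1/2}(\partial G)$. Thus the entire work is to show $R$ is injective.

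To do that, suppose $\varphi\in H^{-1/2}(\partial G)$ satisfies $R[\varphi]=\partial_\nu S[\varphi]=0$ on $\partial\Omega$. Set $u:=S[\varphi]\in H^1(\Omega\setminus\partial G)$. Because $\mathcal{G}(\cdot,y)$ vanishes on $\partial\Omega$ for every $y\in\partial G$, we also have $u=0$ on $\partial\Omega$; combined with the assumption, $u$ has vanishing Cauchy data on $\partial\Omega$. Since $u$ is harmonic in the connected open set $\Omega\setminus\overline G$ with these zero Cauchy data on a relatively open piece of its boundary, Holmgren's unique continuation theorem (equivalently, the UCP used already in Section~\ref{Section 1}) forces $u\equiv0$ in $\Omega\setminus\overline G$. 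Taking the exterior trace on $\partial G$, which is well-defined in $H^{1/2}(\partial G)$ by the mapping property in Proposition~\ref{claim2}, yields $u_+|_{\partial G}=0$. The single-layer potential has continuous trace across $\partial G$ (the jump is zero for the $0$-th order trace), so the interior trace $u_-|_{\partial G}=0$ as well. Then $u$ solves a homogeneous Dirichlet problem for $\Delta$ in $G$ and vanishes there by uniqueness.

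Now $u\equiv0$ on both sides of $\partial G$, so the jump relation for the normal derivative of the single-layer potential gives
\begin{equation*}
\varphi \;=\; \pm\bigl(\partial_\nu S[\varphi]_{-}-\partial_\nu S[\varphi]_{+}\bigr)\Big|_{\partial G} \;=\; 0
\end{equation*}
in $H^{-1/2}(\partial G)$, which is the required injectivity. Passing to adjoints then gives density of $\mathrm{Rge}(R^\ast)$.

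The main technical point is to justify the jump relation and the continuity of the single-layer trace at the $H^{-1/2}$ density level (and not merely for smooth $\varphi$); this is standard but worth citing, and for the mixed kernel $\mathcal{G}=\mathcal{G}_0+\mathcal{G}_1$ the jump is inherited from the fundamental-solution part $\mathcal{G}_0$ because $\mathcal{G}_1$ is smooth in both variables and contributes no jump, exactly as already used at the end of the proof of Proposition~\ref{claim2}. The analogous statement for $W$ runs along the same lines: take $\eta\in Y^\ast=H^{1/2}(\partial\Omega)$ in the annihilator of $\mathrm{Rge}(W)$, form the double-layer potential in $\Omega$, deduce that its interior/exterior traces vanish via Holmgren applied to a harmonic function with zero boundary data on $\partial\Omega$, and then extract vanishing of $\eta$ from the jump relation for the double layer.
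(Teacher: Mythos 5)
Your proof is correct, and it takes a cleaner and more direct route than the one written in the paper. Both arguments share the same skeleton: reduce denseness of $\mathrm{Rge}(R^\ast)$ to injectivity of $R$ via $\overline{\mathrm{Rge}(R^\ast)}=\ker(R)^\perp$, and then, given $R[\varphi]=\partial_\nu S[\varphi]=0$ on $\partial\Omega$, use the Dirichlet Green's function vanishing on $\partial\Omega$ to get zero Cauchy data on $\partial\Omega$ and apply UCP to conclude the associated potential vanishes in $\Omega\setminus\overline G$. You then exploit the properties of the \emph{single-layer} potential $u=S[\varphi]$ directly: continuity of the trace across $\partial G$ gives $u=0$ on both sides of $\partial G$, the homogeneous Dirichlet problem in $G$ gives $u\equiv0$, and the jump in the normal derivative yields $\varphi=0$. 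The paper's proof instead introduces a potential $q$ whose normal derivative is continuous across $\partial G$ (i.e., a double-layer-type object rather than $S[\varphi]$), deduces from the vanishing exterior normal derivative that $q$ is constant $c$ in $G$ (Neumann argument), reads off $\varphi=c$ from the jump in the trace, and then eliminates $c$ by a contradiction that also invokes an exterior-domain uniqueness argument in $\mathbb{R}^n\setminus\Omega$ together with decay at infinity. Your single-layer/Dirichlet route avoids the exterior problem and the constant-elimination step entirely, and meshes directly with the jump relation already established in the proof of Proposition~\ref{claim2} (where the jump is inherited from the $\G_0$ part, as you correctly note). The analogous sketch you give for $W$ is also consistent with the argument in Proposition~\ref{denseness of Rge(W)}. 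In short: same reduction, same UCP step, but a different and simpler boundary argument at $\partial G$; no gap.
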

	\begin{proof}
		It is well known that we have $\overline{Rge(R^\ast)}=N(R)^\perp$, where $N(R)$ is the kernel of $R$. Hence it is enough to prove the injectivity of $R$. To show this let
		$\varphi\in H^{-1/2}(\partial G),\,R\varphi=0$ on $\partial\Omega$. Put $$
		q(x):=\int_{\partial G}\partial_{\nu_x}
		\G(x,y)\varphi(y)\,d\sigma (y),\quad x\in\Omega\setminus\partial G,
		$$
		then $q$ satisfies 
		\begin{equation}\label{uniqueness of q}
		\begin{cases}
		\Delta q=0 &\text{ in }{\mathbb R}^n\setminus\partial G,\\
		q=0  &\text{ on }\partial\Omega,\\
		q(x)=O(|x|^{-(n-1)}),\,\nabla q(x)=O(|x|^{-n}) &\text{ as }|x|\rightarrow\infty.
		\end{cases}
		\end{equation}
		By the uniqueness of the exterior problem (see Chapter 8 of \cite{Mizohata}), we have
		\begin{equation}\label{q zero}
		q=0\,\,\text{in}\,\,{\mathbb R}^n\setminus\Omega
		\end{equation}
		and this implies $\partial_\nu q=0$ on $\partial\Omega$. Here note that we have one order faster decay for $q(x)$ and $\nabla q(x)$, but the order given in \eqref{uniqueness of q} is enough to have \eqref{q zero}. Then by the UCP, we have $q=0$ in ${\mathbb R}^n\setminus\overline G$ and this implies $(\partial_\nu q)_+\big|_{\partial G}=0$. By the continuity of the normal derivative of $q$ at $\partial G$, we have the trace of the normal derivative $(\partial_\nu q)_-\big|_{\partial G}$ taken from $G$ is zero, i.e.$(\partial_\nu q)_-\big|_{\partial G}=0$. Recalling $\Delta q=0$ in $G$, there exists a constant $c$ such that $q=c$ on $\overline G$. Hence by the jump formula, $\varphi=c$, and hence
		$$
		q(x)=c\int_{\partial G}\partial_{\nu_x} \G(x,y)\,d\sigma (y),\quad x\in {\mathbb R}^n\setminus\partial G.
		$$
		This contradicts to $q=0$ in ${\mathbb R}^3\setminus\overline G$ if $c\not=0$. Hence $c$ must be zero and hence $\varphi=0$.
		Thus we have proven the injectivity of $R$.
	\end{proof}	
	\begin{prop}\label{denseness of Rge(W)}
		Consider the layer potential operator $W$ defined by \eqref{double-layer potential} as an operator $W:H^{1/2}(\partial\Omega)\rightarrow H^{1/2}(\partial\widetilde M(z))$. Then	the range $Rge(W)\subset H^{1/2}(\partial\widetilde M(z))$ of $W$ is dense.
	\end{prop}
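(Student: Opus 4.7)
The plan is to prove density by duality: since $W$ is a bounded linear operator between Hilbert spaces, $\overline{Rge(W)} = N(W^*)^\perp$, so showing $W$ has dense range reduces to proving the adjoint $W^\ast:H^{-1/2}(\partial\widetilde M(z))\to H^{-1/2}(\partial\Omega)$ is injective. First I would compute $W^\ast$ by interchanging the order of integration in the duality pairing, obtaining
\begin{equation*}
W^\ast[\psi](y)=\int_{\partial\widetilde M(z)}\partial_{\nu_y}\G_0(x,y)\psi(x)\,d\sigma(x),\quad y\in\partial\Omega,
\end{equation*}
which I recognize as the normal derivative (with respect to the outer normal of $\partial\Omega$) of the single-layer potential on $\partial\widetilde M(z)$
\begin{equation*}
v(y):=\int_{\partial\widetilde M(z)}\G_0(x,y)\psi(x)\,d\sigma(x),\quad y\in\R^n\setminus\partial\widetilde M(z).
\end{equation*}
This $v$ is harmonic in $\R^n\setminus\partial\widetilde M(z)$, continuous across $\partial\widetilde M(z)$, and (for $n=3$) decays like $O(|y|^{-1})$ at infinity.

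Next I would assume $W^\ast[\psi]=0$, i.e.\ $\partial_\nu v=0$ on $\partial\Omega$, and run essentially the same chain of arguments used in the proof of Proposition \ref{denseness of Rge(R ast)}. In $\R^n\setminus\overline\Omega$, $v$ is harmonic with vanishing Neumann data on $\partial\Omega$ and the required decay at infinity, so by the uniqueness of the exterior Neumann problem, $v\equiv 0$ in $\R^n\setminus\overline\Omega$. Since $v$ is harmonic (hence analytic) in the connected open set $\R^n\setminus\overline{\widetilde M(z)}$, the unique continuation property upgrades this to $v\equiv 0$ throughout $\R^n\setminus\overline{\widetilde M(z)}$. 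Continuity of the single-layer potential across $\partial\widetilde M(z)$ then gives $v|_{\partial\widetilde M(z)}=0$ from the interior side, and the uniqueness of the interior Dirichlet problem on $\widetilde M(z)$ forces $v\equiv 0$ in $\widetilde M(z)$ as well. Finally, the jump formula for the normal derivative of the single-layer potential yields $\psi=(\partial_\nu v)_- -(\partial_\nu v)_+=0$, proving injectivity of $W^\ast$.

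The main technical nuisance will be the two-dimensional case, where $\G_0(x,y)=-\frac{1}{2\pi}\log|x-y|$ does not decay at infinity; instead $v(y)\sim -\frac{1}{2\pi}\bigl(\int_{\partial\widetilde M(z)}\psi\,d\sigma\bigr)\log|y|$, so the exterior Neumann problem only determines $v$ up to an additive constant. The standard workaround is to apply Green's identity on $B_R\setminus\overline\Omega$, let $R\to\infty$, and use $\partial_\nu v=0$ on $\partial\Omega$ together with the logarithmic behaviour to first deduce $\int_{\partial\widetilde M(z)}\psi\,d\sigma=0$; once $v$ is guaranteed to decay, the argument of the previous paragraph proceeds unchanged, and the possible additive constant is ruled out by testing against the constant function. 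Everything else (jump formulas, interior Dirichlet uniqueness, analytic continuation) is dimension-independent, so this is the only spot where the two cases $n=2,3$ must be handled separately, matching the unified style announced at the beginning of the appendix.
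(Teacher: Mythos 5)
Your argument is correct and is essentially the proof the paper intends but leaves implicit by delegating to Proposition~\ref{denseness of Rge(R ast)}: reduce density to injectivity of the transposed kernel operator, recognize it as $\partial_\nu v|_{\partial\Omega}$ where $v$ is the single-layer potential on $\partial\widetilde M(z)$, and then chain exterior uniqueness, unique continuation, continuity of $v$ across $\partial\widetilde M(z)$, interior Dirichlet uniqueness, and the jump of $\partial_\nu v$ to force $\psi=0$. Two minor remarks: the operator $H^{-1/2}(\partial\widetilde M(z))\to H^{-1/2}(\partial\Omega)$ given by your kernel formula is the Banach \emph{dual}, which the paper denotes $W^{(\ast)}$ and distinguishes from the Hilbert-space adjoint $W^\ast$ via $W^\ast=J_Y W^{(\ast)}J_X^{-1}$ (the conclusion, dense range iff dual injective, is of course the same); and since $W$ is built from the free-space kernel $\G_0$ rather than the Dirichlet Green function $\G$, the potential $v$ does not automatically vanish on $\partial\Omega$, so your switch to exterior \emph{Neumann} uniqueness is precisely the adaptation the paper's phrase ``almost the same as Proposition~\ref{denseness of Rge(R ast)}'' glosses over.
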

	\begin{proof}
		As mentioned before the proof is quite similar to the proof of Proposition \ref{denseness of Rge(R ast)}. Hence we will only point out some additional things necessary for proving the denseness of $Rge(W)$. Denote $X:=H^{-1/2}(\partial\widetilde M(z))$, $Y:=H^{-1/2}(\partial\Omega)$ and their dual spaces by $X^\ast=H^{1/2}(\partial\widetilde M(z))$ and $Y^\ast=H^{1/2}(\partial\Omega)$, respectively.
		Also let $J_X: X\rightarrow X^\ast$ and $J_Y: Y\rightarrow Y^\ast$ be the isometric isomorphisms.  Let $W^{\ast}:X^\ast\rightarrow Y^\ast$ and $W^{(\ast)}: X\rightarrow Y$ be the adjoint operator and dual operator of $W$. Then since $W^{\ast}=J_Y W^{(\ast)}J_X^{-1}$, it is enough to prove $W^{(\ast)}$ is injective. By a direct computation, $W^{(\ast)}$ is given as
		$$
		W^{(\ast)}[\psi](y)=\int_{\partial G}\partial_{\nu_y}\G(x,y)\psi(x)\,d\sigma(x),\,\,y\in\partial\Omega
		$$
		for any $\psi\in X$.
		Further, by the denseness of $X^\ast\subset X$ and the continuity of $W^{(\ast)}$, it is enough to prove that if $\psi\in X^\ast$,  $W^{(\ast)}[\psi](y)=0,\,y\in\partial\Omega$, then $\psi=0$. The rest of the proof is almost the same as that of Proposition \ref{denseness of Rge(R ast)}.
	\end{proof}

	\vspace{1mm}
	
	\noindent \textbf{Acknowledgment.}
	%The second author appreciate Roland Potthast for having a useful discussion on the proof of the identity corresponding to \eqref{manipulation3} in the proof of the duality between the RT and NRT for the inverse scattering problem \cite{Nakamura}. 
	The second author appreciate Professor Guanghui Hu at Beijing Computational Science Research Center for several useful discussions. 
	For the research funds, the first author, the second author and the fourth author acknowledge their supports from the Ministry of Science and Technology Taiwan, under the Columbus Program: MOST-109-2636-M-009-006, the Grant-in-Aid for Scientific Research 19K03554 and 15H05740 of JSPS and the National Natural Science Foundation of China (Nos.11671082, 11971104), respectively.

\end{document}